\newenvironment{red}
{\relax\color{red}}
{\hspace*{.5ex}\relax}
\newcommand{\ber}{\begin{red}}
\newcommand{\er}{\end{red}}
\newenvironment{blue}
{\relax\color{blue}}
{\hspace*{.5ex}\relax}
\newcommand{\beb}{\begin{blue}}
\newcommand{\eb}{\end{blue}}
\newenvironment{green}
{\relax\color{green}}
{\hspace*{.5ex}\relax}
\newcommand{\bev}{\begin{green}}
\newcommand{\ev}{\end{green}}
\newcommand{\bw}{\boldsymbol{w}}
\newcommand{\pw}{\bw_p}
\newcommand{\sw}{\bw_s}
\newcommand{\HH}{\mathcal H}
\newcommand{\PP}{\mathcal{P}}
\newcommand\Tstrut{\rule{0pt}{7.0ex}}
\newcommand\Bstrut{\rule[-3.9ex]{0pt}{0pt}}
\definecolor{ffffff}{rgb}{1.,1.,1.}
\definecolor{qqqqff}{rgb}{0.,0.,1.}
\definecolor{ffqqqq}{rgb}{1.,0.,0.}
\def\diaggdur{\ncnode{d}{s_0}\dbar20pt\ncnode{ }{s_1}
               \cdots \cdots \sbar20pt \kern -2pt \ncnode{ }{s_{n-2}}\sbar20pt \kern -2pt \ncnode{ }{s_{n-1}}}
\def\diaggdeer{{\scriptstyle s}\cnode d\kern -1pt
               \rlap{\kern -10pt \lower 10pt\hbox{$\scriptstyle e+1$}}
	       \raise 16.6pt\hbox{$\ucirc$}
               \kern -28.8pt\lower 12.2pt\hbox{$\lcirc$}
	       \kern -28.8pt\phantom{\dbar 14 pt}
    \kern-3.6pt
    \raise10pt\hbox{$\cnode 2$\rlap{\raise 3pt\hbox{$\kern 2pt\scriptstyle t'_2$}}}
    \raise7pt\hbox{$\diagdown$}
    \kern-17.5pt
    \lower10pt\hbox{$\cnode 2$\rlap{\lower 3pt\hbox{$\kern 2pt\scriptstyle t_2$}}}
    \lower7pt\hbox{$\diagup$}
    \kern -11.6pt\dbar 10pt\ncnode{2}{t_3}\sbar10pt\ncnode{2}{t_4}\cdots
      \ncnode{2}{t_r}
}
\newenvironment{verd}
{\relax\color{magenta}}
{\hspace*{.5ex}\relax}
\newcommand{\bg}{\begin{verd}}
\newcommand{\eg}{\end{verd}}
\def\inv{^{-1}}
\def\ncnode#1#2{{\kern -1pt\mathop\bigcirc\limits_{#2}
                \kern-11pt{\scriptstyle#1}\kern 4pt}}
 \def\nRnode#1#2{{\kern -0.4pt\mathop\Box\limits_{#2}
   \kern-8.6pt{\scriptstyle#1}\kern 2.3pt}}
\def\sbar#1pt{{\vrule width#1pt height3pt depth-2pt}}
\def\dbar#1pt{{\rlap{\vrule width#1pt height2pt depth-1pt}
                 \vrule width#1pt height4pt depth-3pt}}
\newtheorem{thm}{Theorem}[section]
\newtheorem{lem}[thm]{Lemma}
\newtheorem{prop}[thm]{Proposition}
\newtheorem{cor}[thm]{Corollary}
\theoremstyle{definition}
\newtheorem{defn}[thm]{Definition}
\newtheorem{ex}[thm]{Example}
\numberwithin{equation}{section}
\author[J.-Y. Lee, D.-I. Lee and S. Kim]{Jeong-Yup Lee$^1$, Dong-il Lee$^{2,*}$ and Sungsoon Kim$^3$}
\address{Department of Mathematics Education, Catholic Kwandong University\\
Gangwondo 25601, Korea}
\email{jylee@cku.ac.kr}
\thanks{$^1$ This research was supported by
NRF Grant \# 2017078374.}
\address{Department of Mathematics, Seoul Women's University\\
Seoul 01797, Korea}
\email{dilee@swu.ac.kr}
\thanks{$^2$ This research was supported by NRF Grant \# 2018R1D1A1B07044111
.}
\thanks{* 
Corresponding author}
\address{LAMFA-CNRS UMR 7352, UPJV, 33 rue St. Leu, 80039 Amiens, France,
(membre assoc. \'equipe des groupes, IMJ-PRG Univ. Paris 7)}
\email{
sungsoon.kim@u-picardie.fr}
\thanks{$^3$ This author is grateful to KIAS for its hospitality during this work.}
\keywords{Temperley-Lieb algebra, \GS basis, 
Catalan number, fully commutative element}
\subjclass[2010]{Primary 20F55, Secondary 05E15, 16Z05}
\newcommand{\F}{\mathbb{F}}
\newcommand{\C}{\mathbb{C}}
\newcommand{\Z}{\mathbb{Z}}
\newcommand{\N}{\mathbb{N}}
\newcommand{\G}{Gr\"{o}bner }
\newcommand{\GS}{Gr\"{o}bner-Shirshov }
\newcommand{\tl}{\mathcal{T}}
\begin{document}

\title[Temperley-Lieb Algebras]{Gr\"{o}bner-Shirshov bases for Temperley-Lieb algebras of 
the complex reflection group of type $G(d,1,n)$
}

\begin{abstract}
We construct a \GS basis of the Temperley-Lieb algebra $\tl(d,n)$ of the complex reflection group $G(d,1,n)$,
inducing
the standard monomials
expressed by the generators $\{ E_i\}$ of $\tl(d,n)$. This result generalizes the one for the Coxeter group of type $B_n$ in \cite{KimSSLeeDI}.
We also give a combinatorial interpretation of the standard monomials of $\tl(d,n)$, relating to the fully commutative elements of the complex reflection group $G(d,1,n)$. In this way, we obtain the dimension formula of $\tl(d,n)$.
\end{abstract}
\maketitle

\section{Introduction}


\bigskip

The Temperley-Lieb algebra appears originally in the context of statistical mechanics \cite{TemperleyLieb},
and later its structure has been studied in connection with knot theory,
where it is known to be a quotient of the Hecke algebra of type $A$ in \cite{Jones1987}.

\medskip


Our 
approach to understanding the structure of 
Temperley-Lieb algebras is from the noncommutative \G basis theory,
 or the {\em \GS basis theory} more precisely, which provides a powerful tool for understanding
the structure of (non-)associative algebras and their
representations, especially in computational aspects.
With the ever-growing power of computers,
it is now viewed as a universal engine behind algebraic or symbolic computation.

\medskip

The main interest of the notion of \GS bases stems from Shirshov's Composition Lemma and his algorithm \cite{Sh
} 
for Lie algebras and
independently from Buchberger's algorithm \cite{B65} of computing \G bases for commutative algebras.
In \cite{Bo}, Bokut applied Shirshov's method to associative algebras, and
Bergman mentioned the diamond lemma for ring theory \cite{Be}.
The main idea of
the Composition-Diamond lemma is to establish an algorithm for constructing standard monomials of a quotient algebra by a two-sided ideal generated by a set of relations called \GS basis. Our set of standard monomials in this algorithm is a
minimal set of monomials which are indivisible by any leading monomial of the polynomials in the \GS basis. The details on the \GS basis theory are given in
Section 2.

\medskip

The \GS bases for Coxeter groups of classical and exceptional types
were completely determined by Bokut, Lee {\it et al.} in \cite{BokutShiao, DenisLee, LeeDI5, LeeDILeeJY, Svechkarenko}.
The cases for Hecke algebras and Temperley-Lieb algebras of type $A$
as well as for Ariki-Koike algebras
were calculated by Lee {\it et al.} in \cite{KLLO, KLLO1, LeeDI2}.

\medskip

This paper consists of two principal parts as follows~:

\smallskip

\noindent
1) In the first part of this paper, extending the result for type $B_n$ in\cite{KimSSLeeDI}, we construct a \GS basis for the Temperley-Lieb algebra $\tl(d,n)$
of the complex reflection group of type $G(d,1,n)$ and compute the dimension of
$\tl(d,n)$, by enumerating the standard monomails which are in bijection with the fully commutative elements.

The main Theorem goes as follows~:

\begin{thm}[Main theorem \ref{MainThm}]
The algebra $\tl(d,n)$ has a \GS basis $\widehat{R}_{\tl(d,n)}$ with respect to our monomial order $<$ \rm (i.e. degree-lexicographic order with $ E_0< E_1<\cdots<E_{n-1}$\rm )~:
\begin{eqnarray}\label{relation_ctl_GS}
& E_0^d-(d-1)\delta E_0, \nonumber\\
& E_iE_0 - E_0E_i &\mbox{ for } 1< i\le n-1, \nonumber\\
& E_0E_1E_0^kE_1-(k+1)E_0E_1 &\mbox{ for } 1\le k <d,  \nonumber\\
& E_1E_0^kE_1E_0-(k+1)E_1E_0  &\mbox{ for } 1\le k <d,  \nonumber\\
\widehat{R}_{\tl(d,n)}: & E_{i,1}E_0^kE^{1,j}E_i-E_{i-2,1}E_0^kE^{1,j}E_i &\mbox{ for } i>j+1\ge 1, \nonumber\\
& E_i^2-\delta E_i &\mbox{ for } 1\le i\le n-1, \nonumber\\
& E_iE_j - E_jE_i &\mbox{ for } i> j+1>1, \nonumber\\
& E_{i,j}E_i-E_{i-2,j}E_i &\mbox{ for } i>j>0, \nonumber\\
& E_jE_{i,j}-E_jE_{i,j+2} &\mbox{ for } i>j>0. \nonumber
\end{eqnarray}
The cardinality of the set 
of $\widehat{R}_{\tl(d,n)}$-standard monomials is
$$ \dim \tl(d,n)= 
(d-1)(\mathfrak{F}_{n,n-1}(d)-1)+dC_n $$
 where $\mathfrak{F}_{n,k}(x)=\sum_{s=0}^kC(n,s)x^{k-s}$ is the $(n,k)$th Catalan triangle polynomial,
introduced in \cite[\S2.3]{LO16C}.
\end{thm}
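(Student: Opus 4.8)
The plan is to prove the two assertions in turn: first that $\widehat{R}_{\tl(d,n)}$ is a \GS basis with respect to the stated order, and then that the induced standard monomials number exactly $(d-1)(\mathfrak{F}_{n,n-1}(d)-1)+dC_n$. For the first assertion I would begin by confirming that the listed polynomials lie in, and generate, the defining two-sided ideal of $\tl(d,n)$ inside the free algebra on $E_0,\dots,E_{n-1}$, and that, under the degree-lexicographic order with $E_0<\cdots<E_{n-1}$, the leading monomial of each relation is its first displayed term. With the leading terms identified, the heart of the argument is the Composition-Diamond Lemma: $\widehat{R}_{\tl(d,n)}$ is a \GS basis precisely when every composition arising from an inclusion or overlap ambiguity of a pair of relations is trivial, i.e.\ reduces to $0$ modulo $\widehat{R}_{\tl(d,n)}$.

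I would organize the ambiguity check by the nine families of relations, computing for each ordered pair of families whose leading monomials can share a nonempty overlap the corresponding composition and rewriting it to zero. The far-commutation relations $E_iE_0-E_0E_i$ and $E_iE_j-E_jE_i$ produce the routine disjoint-index overlaps that resolve immediately. The genuinely delicate cases are those mixing the cyclotomic generator $E_0$ with the braid-like words $E_{i,1}E_0^kE^{1,j}E_i$ and with the pulling relations $E_{i,j}E_i-E_{i-2,j}E_i$ and $E_jE_{i,j}-E_jE_{i,j+2}$, together with the parameter-dependent relations $E_0^d-(d-1)\delta E_0$ and $E_0E_1E_0^kE_1-(k+1)E_0E_1$. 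Here each reduction depends on the exponent $k$ and on the index gap $i-j$, so several subcases indexed by these integers must each be shown to collapse consistently; this bookkeeping is exactly where the type-$B_n$ computation of \cite{KimSSLeeDI} must be genuinely generalized, and I expect it to be the main obstacle. Each composition should reduce to zero after repeatedly applying the pulling relations to normalize the descending and ascending runs, the cyclotomic relation to cap powers of $E_0$ at $d-1$, and the quadratic relation $E_i^2-\delta E_i$ to remove repeated generators.

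Once the \GS basis is established, the Composition-Diamond Lemma guarantees that the standard monomials—the words in $E_0,\dots,E_{n-1}$ containing none of the leading monomials as a subword—form a linear basis of $\tl(d,n)$, so the dimension is their number. For the enumeration I would first read off a canonical normal form from the shape of the leading terms: each standard monomial carries at most one block built from $E_0$ with power bounded by $d-1$, sandwiched between a descending run $E_{i,1}$ and an ascending run $E^{1,j}$, while its $E_0$-free part is a standard monomial of the type-$A$ Temperley-Lieb algebra. This decomposition splits the count into the $E_0$-free monomials, which are in bijection with the fully commutative elements of $S_n$ and so number $C_n$, and the monomials genuinely involving $E_0$, which I would match to the remaining fully commutative elements of $G(d,1,n)$ and count by lattice-path data to obtain the Catalan triangle polynomial $\mathfrak{F}_{n,n-1}(d)$. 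Assembling the pieces, and checking the normalization against the base cases $d=1$ (type $A_{n-1}$, dimension $C_n$) and $d=2$ (type $B_n$ of \cite{KimSSLeeDI}), yields the stated formula $(d-1)(\mathfrak{F}_{n,n-1}(d)-1)+dC_n$. The subtle point in this half is pinning down precisely which index ranges of the $E_0$-block give inequivalent standard monomials, so that the generating-function bookkeeping lands on the Catalan triangle polynomial rather than an off-by-one variant.
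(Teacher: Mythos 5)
Your first half---identifying the leading words, checking that every overlap/inclusion composition among the nine families of relations reduces to zero, and concluding via the Composition Lemma---is exactly the paper's route; the paper additionally streamlines the work by observing that compositions among the $E_0$-free relations are already settled by the type-$A$ result, so only pairs involving $E_0$ need to be computed (its proof runs through these in five groups of explicit calculations). You leave that case analysis as a plan rather than carrying it out, but the strategy is sound and coincides with the paper's.

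The genuine gap is in your enumeration. Your structural claim that each standard monomial ``carries at most one block built from $E_0$, sandwiched between a descending run $E_{i,1}$ and an ascending run $E^{1,j}$, while its $E_0$-free part is a type-$A$ standard monomial'' is false, and no bookkeeping on top of it can reach the stated formula. The standard monomials of the form (\ref{monomial_ctl0+}) contain arbitrarily many $E_0$-blocks: they read $(E_{i_1,1}E_0^{k_1})(E_{i_2,1}E_0^{k_2})\cdots(E_{i_q,1}E_0^{k_q})E_{i_{q+1},j_{q+1}}\cdots E_{i_p,j_p}$ with $q$ as large as $n$ and each $1\le k_m<d$. For instance $E_0^2(E_1E_0^2)(E_{2,1}E_0^2)$ is a standard monomial of $\tl(3,3)$ (it appears in the paper's explicit list for $M_{\tl(3,3)}\setminus M_{\tl(B_3)}$): none of the leading words $E_0^d$, $E_iE_0$ $(i\ge 2)$, $E_0E_1E_0^kE_1$, $E_1E_0^kE_1E_0$, $E_{i,1}E_0^kE^{1,j}E_i$, $E_i^2$, $E_{i,j}E_i$, $E_jE_{i,j}$ occurs in it as a subword. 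Indeed, even in the $d=2$ case you invoke as a sanity check, the type-$B_3$ list contains $E_0E_{1,0}E_{2,0}$, with three separated occurrences of $E_0$. Quantitatively, your one-block picture yields a count of the form $C_n+(d-1)\,N(n)$ with $N(n)$ independent of $d$, i.e.\ linear in $d$, whereas the true dimension $(d-1)(\mathfrak{F}_{n,n-1}(d)-1)+dC_n$ is a polynomial of degree $n$ in $d$ with leading term $d^{\,n}$; the two disagree badly already for $\tl(3,3)$, whose dimension is $59$. The paper's count instead splits the $E_0$-containing standard monomials into the multi-block family (\ref{monomial_ctl0+}), enumerated via $G(d,1,n)$-Dyck paths according to the largest block index $i_q$ to give $(d-1)\sum_{s=0}^{n-1}C(n,s)d^{n-1-s}=(d-1)\mathfrak{F}_{n,n-1}(d)$, and the single-block family (\ref{monomial_ctl0-}), counted as $(d-1)(C_n-1)$; adding the $C_n$ monomials of (\ref{monomial_tlA}) assembles the formula. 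To repair your argument you must replace your claimed normal form by this multi-block one and redo the lattice-path enumeration accordingly.
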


We remark here that by specializing $d=2$, we recover the formula for the Temperley-Lieb algebra $\tl(B_n)$ of the $B_n$.

\medskip

\noindent
2) In the second part of this paper, we try to understand some combinatorial aspects on the dimension of the
Temperley-Lieb algebra $\tl(d,n)$.

\smallskip

%

\medskip
 In \cite{KleRam, KleRam2}, Kleshchev and Ram
constructed a class of representations called homogeneous representations of the KLR algebra (or a quiver Hecke algebra) and showed that the homogeneous representations can be parametrized by the set of {\it fully commutative elements} of the corresponding Coxeter group. These elements were studied by Fan \cite{Fan96} and Stembridge \cite{Stembridge96} for Coxeter groups and parametrize the bases of the corresponding Temperley-Lieb algebras.


Motivated by the bijective correspondance of the homogeneous representations of KLR algebras
(or a quiver Hecke algebras) with fully commutative elements,  Feinberg and Lee studied the fully commutative elements of the Coxeter groups of types $A$ and $D$ in their papers \cite{FeinLee} and \cite{FeinLee2} and obtained a dimension
formula of the homogeneous representations by using Dyck paths.

\smallskip
\vskip -30pt
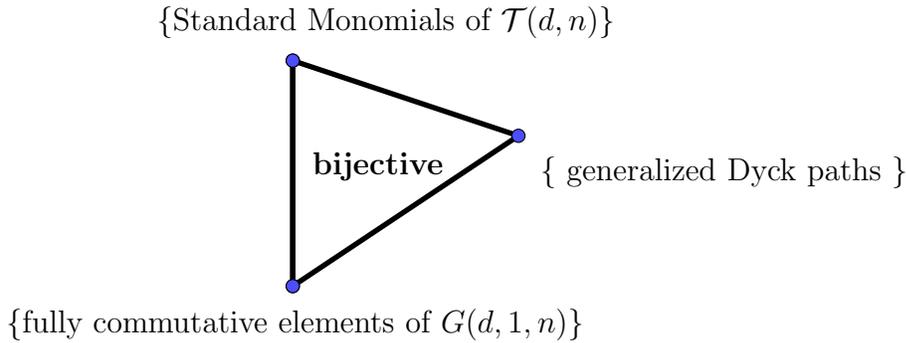
\begin{figure}[h]\label{BijTypeA}
\definecolor{ududff}{rgb}{0.30196078431372547,0.30196078431372547,1.}
\begin{tikzpicture}[line cap=round,line join=round,>=triangle 45,x=1.0cm,y=1.0cm]
\clip(-6.13,0.97) rectangle (12,7);
\draw (-2.95,5.87) node[anchor=north west] {\{Standard Monomials of $\tl(d,n)$\}};
\draw (-4.95,1.85) node[anchor=north west] {\{fully commutative elements of $G(d,1,n)$\}};
\draw (2.15,3.87) node[anchor=north west] {\{ generalized Dyck paths \}};
\draw [line width=2.pt] (-1.,2.) -- (-1.,5.);
\draw [line width=2.pt] (-1.,5.) -- (-1.,2.);
\draw (-0.88,3.93) node[anchor=north west] {\textbf{bijective}};
\draw [line width=2.pt] (-1.,5.)-- (2.,4.);
\draw [line width=2.pt] (2.,4.)-- (-1.,2.);
\begin{scriptsize}
\draw [fill=ududff] (-1.,5.) circle (2.5pt);
\draw [fill=ududff] (-1.,5.) circle (2.5pt);
\draw [fill=ududff] (2.,4.) circle (2.5pt);
\draw [fill=ududff] (-1.,2.) circle (2.5pt);
\draw [fill=ududff] (-1.,2.) circle (2.5pt);
\end{scriptsize}
\end{tikzpicture}
\caption{These three sets are bijective for $G(d,1,n)$}
\end{figure}

Their combinatorial strategy is as follows~:

For type $A$, we decompose the fully commutative elements into natural subsets according to the lengths of fully commutative elements and show that the fully commutative elements of a given length $k$ can be parametrized by the Dyck paths of semi-length $n$ with the property that
{\it \rm(the sum of peak heights\rm)$-$\rm(the number of peaks\rm)$=k$} using the canonical form of reduced words for fully commutative elements. For type $D$, from the set of fully commutative element written in canonical form, we decompose the fully commutative elements according to the same type of prefixes and call the set with exactly the same prefix {\em a collection}. Then they prove that some collections have the same number of elements by showing that those collections contain the same set of prefixes. We then group those collections together and call the group {\em a Packet}. This decomposition process is called the {\it packet decomposition}.

\medskip

In the article \cite{FKLO}, we generalize the above strategy to the complex
reflection group of type $G(d,1,n)$ for the enumeration of the fully commutative elements of $G(d,1,n)$. The main result on the combinatorial aspects in Section \ref{sectionCOMB} is that there is a bijection between the
standard monomials of $\tl(d,n)$ in the first part of this paper
and the fully commutative elements of $G(d,1,n)$ in the second part of this paper.
In this way, we realize an explicit computation of the dimension of $\tl(d,n)$.

\medskip
Our canonical forms for the reduced elements will be
slightly in different form from the ones in \cite{FKLO}
and so is the packet decomposition process. Though, the fully commutative elements in this paper
are better adapted for the standard monomials induced from a \GS basis.

\section{Preliminaries}

\subsection{\GS basis}
We recall a basic theory of {\em \GS bases} for associative algebras so as to make the paper self-contained. Some properties listed below without proofs are well-known and the readers are invited to see the references cited next to each claim for further detailed explanations.
%
\\

Let $X$ be a set and let $\langle X\rangle$ be the free monoid of associative
words on $X$. We denote the empty word by $1$ and the {\em length} 
(or {\em degree}) of a word $u$ by $l(u)$.
We define a total-order $<$ on
$\langle X\rangle$, called a {\em monomial order} as follows~; 

\begin{center}{\it if $x<y$ implies $axb<ayb$
for all $a,b\in \langle X\rangle$.} \\
\end{center}
\smallskip

Fix a monomial order $<$ on $\langle X\rangle$ and let $\mathbb{F}\langle X\rangle$ be the
free associative algebra generated by $X$ over a field $\mathbb{F}$.
Given a nonzero element $p \in \mathbb{F}\langle X\rangle$, we denote by
$\overline{p}$ the monomial (called the {\em leading
monomial}) appearing in $p$, which is maximal under the ordering $<$. Thus $p = \alpha
\overline{p} + \sum \beta _i w_i $ with $\alpha , \beta _i \in
\mathbb{F}$, $ w_i \in \langle X\rangle$, $\alpha \neq 0$ and $w_i <
\overline{p}$ for all $i$. If $\alpha =1$, $p$ is said to be {\em monic}. \smallskip 

Let $S$ be a subset of monic elements in
$\mathbb{F}\langle X\rangle$, and let $I$ be the two-sided ideal of $\mathbb{F}\langle X\rangle$
generated by $S$. Then we say
that the algebra $A= \mathbb{F}\langle X\rangle /I$ is {\em defined by $S$}.

\begin{defn}
Given a subset  $S$  of monic elements in
$\mathbb{F}\langle X\rangle$, a monomial $u \in \langle X\rangle$ is said to be {\em $S$-standard} (or {\em $S$-reduced})
if $u$ cannot be expressed as $a\overline{s}b$, that is $u \neq a\overline{s}b$,
for any $s \in S$ and $a, b \in \langle X\rangle$. Otherwise, the monomial $u$ is said to be {\em $S$-reducible}.
\end{defn}

\begin{lem}[\cite{Be, Bo}]\label{division}
Every $p \in \mathbb{F}\langle X\rangle$ can be expressed as
\begin{equation} \label{equ-1}
p = \sum \alpha_i a_is_ib_i + \sum \beta_j
u_j,
\end{equation}
where $\alpha_i, \beta_j \in \mathbb{F}$, $a_i, b_i, u_j \in \langle X\rangle$, $s_i \in S$, $a_i \overline{s_i} b_i \leq
\overline{p}$, $u_j \leq
\overline{p}$ and $u_j$ are $S$-standard.
\end{lem}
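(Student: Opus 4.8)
The plan is to prove the lemma by well-founded (Noetherian) induction on the leading monomial $\overline{p}$ with respect to the monomial order $<$. The relevant order here is the degree-lexicographic order on the finitely many generators, which is a well-order on $\langle X\rangle$: words are ranked first by length $l(\cdot)$ and then lexicographically, and since there are only finitely many words of each length, every nonempty subset of $\langle X\rangle$ has a least element and no infinite strictly descending chain exists. This well-foundedness is exactly what makes the reduction process terminate, and it is the one point that genuinely has to be secured; the rest is bookkeeping.

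Before the induction I would record the multiplicativity fact that for any monic $s\in S$ and any $a,b\in\langle X\rangle$ one has $\overline{asb}=a\,\overline{s}\,b$. This follows by writing $s=\overline{s}+\sum_k\beta_k v_k$ with each $v_k<\overline{s}$; multiplying on the left by $a$ and on the right by $b$ and applying the compatibility property $v_k<\overline{s}\Rightarrow av_kb<a\overline{s}b$ termwise shows that the unique maximal monomial appearing in $asb$ is $a\overline{s}b$.

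For the induction itself, the base case is $p=0$, where the empty expression works. For $p\neq 0$, write $p=\alpha\overline{p}+\sum_i\beta_i w_i$ with $w_i<\overline{p}$ as in the preliminaries, and split into two cases according to whether $\overline{p}$ is $S$-standard. If $\overline{p}$ is $S$-standard, set $p'=p-\alpha\overline{p}$; then $\overline{p'}<\overline{p}$ (or $p'=0$), the induction hypothesis furnishes a decomposition of $p'$, and adjoining the single standard term $\alpha\overline{p}$ (with $u_j=\overline{p}\le\overline{p}$, which is $S$-standard) yields the desired form for $p$. If instead $\overline{p}$ is $S$-reducible, choose $s\in S$ and $a,b\in\langle X\rangle$ with $\overline{p}=a\overline{s}b$, and set $p''=p-\alpha\,asb$. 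By the multiplicativity fact the leading term of $\alpha\,asb$ is exactly $\alpha\overline{p}$, so the subtraction cancels the leading term of $p$ and forces $\overline{p''}<\overline{p}$ (or $p''=0$). Applying the induction hypothesis to $p''$ and prepending the term $\alpha\,asb$, which satisfies $a\overline{s}b=\overline{p}\le\overline{p}$, produces the required expression $p=\alpha\,asb+p''$.

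Finally I would verify that in both cases all newly introduced terms meet the stated bounds: every reduction term $a_is_ib_i$ contributes $a_i\overline{s_i}b_i\le\overline{p}$ and every standard term $u_j$ satisfies $u_j\le\overline{p}$, since the terms inherited from the decompositions of $p'$ or $p''$ are bounded by $\overline{p'},\overline{p''}<\overline{p}$ by the induction hypothesis, while the single peeled-off term equals $\overline{p}$. The only real obstacle is the termination of the recursion in the $S$-reducible case, which is handled by the well-order property noted at the outset; without it the repeated reductions could in principle fail to terminate, and the existence of a finite decomposition would break down.
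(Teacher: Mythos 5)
Your proof is correct: the paper itself gives no proof of this lemma, citing Bergman and Bokut instead, and your argument --- Noetherian induction on the leading monomial, after establishing $\overline{asb}=a\,\overline{s}\,b$ from monicity and the compatibility of the monomial order, with the two cases according to whether $\overline{p}$ is $S$-standard or $S$-reducible --- is exactly the standard proof from those sources. You are also right to isolate well-foundedness as the one genuinely load-bearing hypothesis; your justification of it (degree first, finitely many words per degree) is valid in this paper's setting of finitely many generators $E_0,\ldots,E_{n-1}$, though for an infinite alphabet one would instead have to assume outright that the monomial order is a well-order.
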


\noindent {\it Remark}.
The term $\sum \beta_j u_j$ in the expression (\ref{equ-1}) is
called a {\em normal form} (or a {\em remainder}) of $p$ with
respect to the subset $S$ (and with respect to the monomial order
$<$). In general, a normal form is not unique.
\\

As an immediate corollary of Lemma \ref{division}, we obtain:

\begin{prop} 
The set of $S$-standard monomials spans the algebra
$A=\mathbb{F}\langle X\rangle/I$ defined by the subset $S$, as a vector space over $\mathbb{F}$.
\end{prop}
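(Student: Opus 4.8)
The plan is to read the statement straight off Lemma~\ref{division}, since the spanning property is precisely what the division expression yields once one passes to the quotient. First I would fix an arbitrary element $\bar{x}$ of $A = \mathbb{F}\langle X\rangle/I$ and choose a lift $p \in \mathbb{F}\langle X\rangle$ with $p + I = \bar{x}$; such a lift exists because the canonical projection $\mathbb{F}\langle X\rangle \to A$ is surjective. The goal then reduces to exhibiting $\bar{x}$ as a finite $\mathbb{F}$-linear combination of images of $S$-standard monomials.

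Next I would apply Lemma~\ref{division} to $p$, writing
\[
p = \sum_i \alpha_i a_i s_i b_i + \sum_j \beta_j u_j,
\]
with $\alpha_i, \beta_j \in \mathbb{F}$, $s_i \in S$, $a_i, b_i, u_j \in \langle X\rangle$, and each $u_j$ being $S$-standard. The key observation is that every summand $a_i s_i b_i$ lies in $I$: by hypothesis $I$ is the two-sided ideal generated by $S$, hence it contains $asb$ for all $a,b \in \langle X\rangle$ and $s \in S$. Reducing modulo $I$ therefore annihilates the first sum, giving $\bar{x} = p + I = \sum_j \beta_j\,(u_j + I)$, which displays $\bar{x}$ as the desired combination of images of $S$-standard monomials.

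Since $\bar{x}$ was arbitrary, the images of the $S$-standard monomials span $A$ over $\mathbb{F}$, establishing the claim. I expect essentially no obstacle here: all the genuine content—producing a division with remainder whose quotient terms $a_i\overline{s_i}b_i$ do not exceed $\overline{p}$ and whose remainder is $S$-standard—has already been absorbed into Lemma~\ref{division}, so the proposition is merely its corollary after quotienting. The only points deserving a word of care are the surjectivity of the projection (guaranteeing the lift $p$) and the membership $a_i s_i b_i \in I$ (immediate from the definition of the two-sided ideal generated by $S$), and neither is substantive.
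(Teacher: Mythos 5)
Your proof is correct and follows exactly the route the paper intends: the paper states this proposition as an immediate corollary of Lemma~\ref{division}, and your argument (applying the division expression to a lift of an arbitrary coset, noting each $a_i s_i b_i \in I$, and reducing modulo $I$) is precisely the spelled-out version of that one-line deduction. Nothing is missing; you have simply made explicit what the paper leaves implicit.
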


Let $p$ and $q$ be monic elements in $\F\langle X\rangle$ with leading
monomials $\overline{p}$ and $\overline{q}$. We define the {\em
composition} of $p$ and $q$ as follows.

\begin{defn}
(a) If there exist $a$ and $b$ in $\langle X\rangle$ such that
$\overline{p}a = b\overline{q} = w$ with $l(\overline{p}) > l(b)$,
then we define $(p,q)_w := pa -bq$, called the {\em composition of intersection}.

(b) If there exist $a$ and $b$ in $\langle X\rangle$ such that $a \neq 1$,
$a\overline{p}b=\overline{q}=w$, then we define $(p,q)_{a,b} := apb - q$, called
the {\em composition of inclusion}.
\end{defn}


Let $p, q \in \F\langle X\rangle$ and $w \in \langle X\rangle$. We define the {\em
congruence relation} on $\F\langle X\rangle$ as follows: $p \equiv q
\mod (S; w)$ if and only if $p -q = \sum \alpha_i a_i s_i b_i$, where $\alpha_i \in \mathbb{F}$,
$a_i, b_i \in \langle X\rangle$, $s_i \in S$,  and $a_i
\overline{s_i} b_i < w$.

\begin{defn}
A subset $S$ of monic elements in $\mathbb{F}\langle X\rangle$
is said to be {\em closed under composition} if
\begin{enumerate}
\item[] $(p,q)_w \equiv 0 \mod (S;w)$ and $(p,q)_{a,b} \equiv
0 \mod (S;w)$ for all $p,q \in S$, $a,b \in \langle X\rangle$ whenever the
compositions $(p,q)_w$ and $(p,q)_{a,b}$ are defined.
\end{enumerate}
\end{defn}



The following theorem is a main tool for our results in the subsequent sections.

\begin{thm}[Composition Lemma \cite{Be, Bo}]
\label{cor-1}
Let $S$ be a subset of
monic elements in $\F\langle X\rangle$. Then the following conditions
are equivalent\,{\rm :}
\begin{enumerate}
\item [{\rm (a)}] $S$ is closed under composition.
\item [{\rm (b)}] For each $p \in \mathbb{F}\langle X\rangle$, a normal form of $p$ with respect to $S$ is unique.
\item [{\rm (c)}]
The set of $S$-standard monomials forms a linear basis of the
algebra $A=\mathbb{F}\langle X\rangle/I$ defined by $S$.
\end{enumerate}
\end{thm}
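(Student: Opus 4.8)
The plan is to establish the cycle of implications $(a)\Rightarrow(b)\Rightarrow(c)\Rightarrow(a)$. The two implications $(b)\Rightarrow(c)$ and $(c)\Rightarrow(a)$ are essentially formal consequences of Lemma~\ref{division} and of the Proposition above asserting that the $S$-standard monomials span $A$, while the substance lies in $(a)\Rightarrow(b)$, which is the Composition--Diamond argument. Throughout I would work with the \emph{reduction operators}: for monic $s\in S$ write $s=\overline{s}-(\overline{s}-s)$, and for $a,b\in\la X\ra$ let $r_{a,s,b}$ be the $\F$-linear map replacing an occurrence of the monomial $a\overline{s}b$ by $a(\overline{s}-s)b$. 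A monomial is fixed by every $r_{a,s,b}$ exactly when it is $S$-standard, and a \emph{normal form} of $p$ in the sense of Lemma~\ref{division} is precisely the outcome of a maximal chain of such reductions applied to $p$. The one-line identity $r_{a,s,b}(asb)=0$ will be used repeatedly.

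For $(b)\Rightarrow(c)$ only linear independence is needed, since spanning is already furnished by the Proposition. Uniqueness of normal forms lets me define an assignment $r\colon\F\la X\ra\to\F\la X\ra$ sending each $p$ to its unique normal form; uniqueness forces $r$ to be $\F$-linear, its image is the span of the $S$-standard monomials, and $p-r(p)\in I$ for every $p$. From $r_{a,s,b}(asb)=0$ and linearity one gets $r(asb)=0$ for all $a,b\in\la X\ra$, $s\in S$, so $r$ annihilates $I$. Hence $r$ descends to a well-defined inverse of the surjection from the span of the $S$-standard monomials onto $A$, forcing those monomials to be a basis. For $(c)\Rightarrow(a)$, given $p,q\in S$ with a composition $(p,q)_w$ defined, I note $(p,q)_w\in I$ and, by the cancellation of leading terms in the definition, $\overline{(p,q)_w}<w$. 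Lemma~\ref{division} then writes $(p,q)_w=\sum\alpha_i a_i s_i b_i+\sum\beta_j u_j$ with $a_i\overline{s_i}b_i\le\overline{(p,q)_w}<w$ and $S$-standard remainder; since $(p,q)_w\in I$ and $(c)$ makes the standard monomials linearly independent in $A$, the remainder vanishes, giving $(p,q)_w\equiv0\bmod(S;w)$. The inclusion compositions $(p,q)_{a,b}$ are handled identically, so $S$ is closed under composition.

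The core is $(a)\Rightarrow(b)$, which I would prove by Newman's diamond lemma applied to the reduction system $\{r_{a,s,b}\}$. First, termination: because the monomial order $<$ is a well-order (the descending chain condition, which our degree-lexicographic order satisfies), every chain of reductions terminates, so normal forms exist and it suffices to prove local confluence. Local confluence reduces to resolving the \emph{ambiguities}, the minimal words $w$ admitting two distinct applicable reductions: these are exactly the overlap situations $\overline{p}a=b\overline{q}=w$ and the inclusion situations $a\overline{p}b=\overline{q}=w$, the disjoint case being resolved trivially by commuting the two reductions. For an overlap, a direct computation shows that the difference of the two one-step reductions of $w$ equals $-(p,q)_w$ (respectively $-(p,q)_{a,b}$ in the inclusion case), and hypothesis $(a)$ says precisely that this composition is $\equiv0\bmod(S;w)$, i.e. rewritable using relations with $a_i\overline{s_i}b_i<w$.

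The main obstacle is the last step of this argument: one must show, by induction on $w$ under $<$, that congruence modulo $(S;w)$ is \emph{preserved} as the lower-order tails are themselves reduced, so that resolvability of the minimal ambiguities propagates to a common normal form for the two reducts of every ambiguous word $a\overline{s}b$, and thence to confluence on all of $\F\la X\ra$. This bookkeeping—the overlap case together with the well-founded induction guaranteeing that reducing below $w$ never reintroduces higher terms—is the delicate part. Once local confluence is established, Newman's lemma upgrades it to global confluence, which is exactly uniqueness of normal forms, completing $(a)\Rightarrow(b)$ and hence the equivalence.
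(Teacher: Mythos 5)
The paper itself offers no proof of Theorem~\ref{cor-1}: it is stated as the classical Composition (Diamond) Lemma with a citation to Bergman and Bokut, so your attempt must be measured against the standard argument in those sources. Your cycle $(a)\Rightarrow(b)\Rightarrow(c)\Rightarrow(a)$ is exactly that standard architecture, and two of the three arrows are correct and essentially complete. For $(b)\Rightarrow(c)$, the passage from unique normal forms to a linear map $r$ killing $I$ is right (the identity $r_{a,s,b}(asb)=0$ holds because every monomial of $a(\overline{s}-s)b$ is strictly below $a\overline{s}b$, and linearity of $r$ follows since elements already in normal form are fixed by all further reductions). For $(c)\Rightarrow(a)$, the cancellation of leading terms giving $\overline{(p,q)_w}<w$, followed by Lemma~\ref{division} and linear independence of the $S$-standard monomials in $A$ to kill the remainder, is precisely the right argument, and it covers inclusion compositions verbatim.

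The genuine gap is in $(a)\Rightarrow(b)$, and you have in effect flagged it yourself without closing it. You correctly establish termination and correctly identify that the difference of the two one-step reductions at an overlap or inclusion is $\pm(p,q)_w$, which hypothesis (a) writes as $\sum\alpha_i a_i s_i b_i$ with $a_i\overline{s_i}b_i<w$. But the decisive step --- that such a congruence $\equiv 0 \bmod (S;w)$ actually yields a \emph{common further reduct} of the two reducts of $w$ --- is announced as ``the main obstacle'' and ``the delicate part'' and then never carried out. This is not bookkeeping; it is the content of the lemma (Bergman's passage from ``resolvable relative to $\le$'' to ``resolvable''), and it requires a well-founded induction showing that every element supported on monomials $<w$ is reduction-unique, which one then applies to rewrite $\sum\alpha_i a_i s_i b_i$ into a common normal form for both reducts. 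Two further points are glossed over in a way the same induction must repair: Newman's lemma needs local confluence on arbitrary elements of $\F\la X\ra$, not merely on single monomials, and even your ``trivial'' disjoint case is not literally a commutation of operators, since replacing one tail can create or destroy occurrences of the other pattern, so joining the two results again invokes reduction-uniqueness below $w$. As it stands, the proposal is a faithful roadmap of the cited Bergman--Bokut proof with its central inductive lemma left unproven.
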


\begin{defn}
A subset $S$ of monic elements in $\mathbb{F}\langle X\rangle$ satisfying one of the equivalent conditions in Theorem \ref{cor-1} is called a
a {\em \GS basis} for the algebra $A$ defined by $S$.
\end{defn}

\section{Temperley-Lieb algebras of types $A_{n-1}$ and $B_n$}

\subsection{Temperley-Lieb algebra of type $A_{n-1}$}

First, we review the results on Temperley-Lieb algebras $\tl(A_{n-1})$ $(n\ge 2)$
.
Define $\tl(A_{n-1})$ to be the associative algebra over the complex field $\C$, generated by $X=\{E_1,E_2, \ldots, E_{n-1}\}$
with defining relations:
\begin{eqnarray} \label{relation_tlA}
& E_i^2=\delta E_i &\mbox{ for } 1\le i\le n-1, \ \mbox{(idempotent relations)} \nonumber\\
R_{\tl(A_{n-1})}:& E_iE_j = E_jE_i &\mbox{ for } i> j+1, \qquad \mbox{(commutative relations)}\nonumber\\
& E_iE_jE_i=E_i &\mbox{ for } j=i\pm 1, \qquad \mbox{(untwisting relations)} \nonumber
\end{eqnarray}
where $\delta\in\C$ is a parameter.
We call the first and second relations to be the quadratic and commutative relations, respectively.
Our monomial order $<$ is taken to be the degree-lexicographic order with $$
E_1<E_2<\cdots<E_{n-1}.$$ We write $E_{i,j}=E_iE_{i-1}\cdots E_j$ for $i\ge j$ (hence $E_{i,i}=E_i$).
By convention $E_{i,i+1}=1$ for $i\ge 1$.

\begin{prop}{\rm (\cite[Proposition 6.2]{KLLO})}
The Temperley-Lieb algebra $\tl(A_{n-1})$ has a \GS basis $\widehat{R}_{\tl(A_{n-1})}$as follows:
\begin{eqnarray}\label{relation_tlA_GS}
& E_i^2-\delta E_i &\mbox{ for } 1\le i\le n-1, \nonumber\\
\widehat{R}_{\tl(A_{n-1})}:& E_iE_j - E_jE_i &\mbox{ for } i> j+1, \\
& E_{i,j}E_i-E_{i-2,j}E_i &\mbox{ for } i>j, \nonumber\\
& E_jE_{i,j}-E_jE_{i,j+2} &\mbox{ for } i>j. \nonumber
\end{eqnarray}
The corresponding $\widehat{R}_{\tl(A_{n-1})}$-standard monomials are of the form
\begin{equation}\label{monomial_tlA}
E_{i_1,j_1}E_{i_2,j_2}\cdots E_{i_p,j_p}\quad (0\le p\le n-1)
\end{equation}
where $$\begin{aligned}
&1\le i_1<i_2<\cdots <i_p\le n-1,\quad
1\le j_1<j_2<\cdots <j_p\le n-1,\\
&i_1\ge j_1,\ i_2\ge j_2,\ \ldots,\ i_p\ge j_p\quad \text{(the case of $p=0$ is the monomial $1$).}
\end{aligned}$$

We denote the set of $\widehat{R}_{\tl(A_{n-1})}$-standard monomials by $M_{\tl(A_{n-1})}$.
Note that the number of $\widehat{R}_{\tl(A_{n-1})}$-standard monomials equals  the $n^{\text{th}}$ Catalan number, i.e.
\begin{equation}\label{monomial_tlA_count}
|M_{\tl(A_{n-1})}|=\frac{1}{n+1} {2n\choose n} =C_n. \nonumber
\end{equation}
\end{prop}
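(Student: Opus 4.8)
The plan is to apply the Composition Lemma (Theorem~\ref{cor-1}): once I show that $\widehat{R}_{\tl(A_{n-1})}$ is closed under composition, its standard monomials form a linear basis of the algebra it defines, and I then identify those monomials and count them. First I would check that every element of $\widehat{R}_{\tl(A_{n-1})}$ lies in the defining ideal of $\tl(A_{n-1})$: the quadratic and commutative relations are built in, and the relations $E_{i,j}E_i-E_{i-2,j}E_i$ and $E_jE_{i,j}-E_jE_{i,j+2}$ follow by repeatedly commuting $E_i$ (resp.\ $E_j$) past the far-apart generators and applying one untwisting relation. Conversely, specializing $j=i-1$ in the third family and $i=j+1$ in the fourth recovers the untwisting relations $E_iE_{i-1}E_i-E_i$ and $E_jE_{j+1}E_j-E_j$, so the ideal generated by $\widehat{R}_{\tl(A_{n-1})}$ coincides with the defining ideal; in particular the algebra it defines is $\tl(A_{n-1})$ and the standard monomials span it. Under the degree-lexicographic order the leading monomials are $E_i^2$, $E_iE_j$ with $i>j+1$, $E_{i,j}E_i=E_iE_{i-1}\cdots E_jE_i$ with $i>j$, and $E_jE_{i,j}=E_jE_iE_{i-1}\cdots E_j$ with $i>j$.

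The main step is to verify closure under composition, that is, that every composition $(p,q)_w$ is congruent to $0$ modulo $(\widehat{R}_{\tl(A_{n-1})};w)$. I would organize the overlaps by which pair of leading-monomial families meets: the quadratic monomial $E_i^2$ can overlap itself and the initial or terminal $E_i$ of the two long families; the commutative monomial $E_iE_j$ overlaps the long families at their end-letters; and, most importantly, the two long families $E_{i,j}E_i$ and $E_jE_{i,j}$ overlap themselves and each other along a shared descending segment. For each overlap I would write out $pa-bq$ and reduce by successive applications of the four relations, checking that the two normal forms agree. I expect this exhaustive confluence check --- and in particular the self- and cross-overlaps of the third and fourth families, where the shared run can be of arbitrary length --- to be the main obstacle, since the reductions are lengthy and must be tracked carefully; the remaining overlaps collapse quickly.

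With closure established, the Composition Lemma gives that the standard monomials are exactly the words avoiding the four leading monomials, and I would now read off their shape. Avoiding $E_i^2$ and $E_iE_j$ $(i>j+1)$ forces every two consecutive letters either to increase or to decrease by exactly $1$, so any standard word factors uniquely into maximal descending runs $E_{i_1,j_1}E_{i_2,j_2}\cdots E_{i_p,j_p}$ with $i_k\ge j_k$ and an up-step at each junction, whence $i_{k+1}>j_k$. If some $i_{k+1}\le i_k$, then the run $E_{i_k,j_k}$ passes through level $i_{k+1}$ and is immediately followed by $E_{i_{k+1}}$, producing a factor $E_{i_{k+1},j_k}E_{i_{k+1}}$ of the third family; hence $i_1<i_2<\cdots<i_p$. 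Dually, if some $j_{k+1}\le j_k$, then $E_{j_k}$ is immediately followed by the run $E_{i_{k+1}}\cdots E_{j_k}$, producing a factor $E_{j_k}E_{i_{k+1},j_k}$ of the fourth family; hence $j_1<j_2<\cdots<j_p$. The converse is the same computation run backwards: strictly increasing tops and bottoms prevent any of the four factors from appearing, so these products are precisely the standard monomials, as claimed.

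Finally I would count the pairs of strictly increasing sequences $1\le i_1<\cdots<i_p\le n-1$ and $1\le j_1<\cdots<j_p\le n-1$ in $\{1,\dots,n-1\}$ subject to $j_k\le i_k$, summed over $0\le p\le n-1$. Encoding each such monomial as a lattice path by reading the up-steps and down-steps of its run structure gives a bijection with Dyck paths of semi-length $n$, so the total is the Catalan number $C_n$. As an independent check, since the standard monomials already span $\tl(A_{n-1})$ and the classical dimension of the Temperley-Lieb algebra of type $A_{n-1}$ is $C_n$, the equality of counts reconfirms that the spanning set is a basis, consistent with the Composition Lemma conclusion.
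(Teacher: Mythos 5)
Your attempt cannot be compared against an in-paper argument, because the paper does not prove this proposition: it is recalled verbatim from \cite[Proposition 6.2]{KLLO}. Judged on its own merits, your outline follows exactly the standard route of that reference: identify the ideal generated by $\widehat{R}_{\tl(A_{n-1})}$ with the defining ideal of $\tl(A_{n-1})$, verify closure under composition, read off the shape of the standard monomials, and count them. The parts you actually execute are sound --- the two long families do follow from the defining relations by commutations plus one untwisting move, the specializations $j=i-1$ and $i=j+1$ do recover the untwisting relations (so the ideals coincide), your run-decomposition argument is correct (descents can only occur inside maximal descending runs, a violation $i_{k+1}\le i_k$ produces a forbidden factor $E_{i_{k+1},j_k}E_{i_{k+1}}$ of the third family, a violation $j_{k+1}\le j_k$ produces a forbidden factor $E_{j_k}E_{i_{k+1},j_k}$ of the fourth family, and the converse check goes through), and the number of pairs of strictly increasing sequences dominated componentwise is indeed $C_n$.

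The weak point is that the composition (overlap) check --- which is the entire technical content of a \GS-basis claim --- is only announced and organized, never performed; as written, the Composition Lemma route is left incomplete at its central step. However, your closing ``independent check'' is stronger than you give it credit for, and it can close this gap. The standard monomials always span the quotient by Lemma \ref{division} and the proposition following it; you have shown that the quotient is precisely $\tl(A_{n-1})$; their number is $C_n$, which equals the classical dimension of the Temperley--Lieb algebra of type $A_{n-1}$ (known from \cite{Jones1987}, e.g.\ via the diagram basis). A spanning set whose cardinality equals the dimension is a linear basis, so condition (c) of Theorem \ref{cor-1} holds, and the equivalence (c)$\Rightarrow$(a) then yields closure under composition for free. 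If you promote that remark from a consistency check to the main argument, your proof is complete without the exhaustive confluence computation; if instead you insist on the direct route, you must actually carry out the overlap reductions you list, including the self- and cross-overlaps of the two long families along descending segments of arbitrary length.
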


\noindent {\it Remark}.
There are many combinatorial ways to realize the Catalan number $C_n$, but among those, It is well-known that $C_n$ represents the number of Dyck paths of length $2n$ starting from the point $(0,0)$ ending at $(n,n)$
not passing over the diagonal of the $n \times n$-lattice plane.
\\

\begin{ex}\label{STmonoTypeA}
(1) Note that $|M_{\tl(A_2)}|=C_3=5$. Explicitly,
the $\widehat{R}_{\tl(A_2)}$-standard monomials are as follows:
$$1, E_1, E_{2,1}, E_2, E_1E_2.$$
We will give combinatorial interpretations of this set via Dyck paths and fully
commutative elements in the last section of this article.

(2) Another example with $n=3$~:  we have $|M_{\tl(A_3)}|=C_4=14$. Explicitly,
the $\widehat{R}_{\tl(A_3)}$-standard monomials are as follows:
\begin{eqnarray*}
&1, E_1, E_{2,1}, E_2, E_1E_2, E_{3,1}, E_{3,2}, E_3, \\
&E_1E_{3,2}, E_1E_3, E_{2,1}E_{3,2}, E_{2,1}E_3, E_2E_3, E_1E_2E_3.
\end{eqnarray*}
\end{ex}



\subsection{Temperley-Lieb algebra of type $B_n$}

Now we consider the Coxeter diagram for type $B_n$~:

\begin{equation}\label{BnDynkin}
\xymatrix@R=0.5ex@C=4ex{ *{\bullet}<3pt> \ar@{=}[r]_<{0}  &*{\bullet}<3pt>
\ar@{-}[r]_<{1}  &   {} \ar@{.}[r] & *{\bullet}<3pt>
\ar@{-}[r]_>{ \quad n-2} &*{\bullet}<3pt>\ar@{-}[r]_>{
\quad n-1} &*{\bullet}<3pt> } .
\end{equation}

Let $W(B_n)$ be the Weyl group with generators $\{s_i\}_{0 \leq i < n}$ and the following defining relations~:

\begin{equation} \label{Eq: WB}
\begin{aligned}
&\hbox{- quadratic relations} : s_i^2 = 1\,\,\hbox{for } 0\le i\le n-1\\
&\hbox{- braid relations} : \left\{
  \begin{array}{ll}
   s_is_j = s_js_i & \hbox{for } |i-j| > 1\ \ (i,j=1,\ldots,n), \\
   s_is_{i+1}s_i=s_{i+1}s_is_{i+1}& \hbox{for } 1\leq i< n-1,  \\
    s_1s_0s_1s_0=s_0s_1s_0s_1. & \end{array}
 \right. \\[5pt]
\end{aligned}
\end{equation}
Then $W(B_n)$ is the Weyl group of type $B_n$, which is
isomorphic to $(\Z/2\Z)^n\rtimes \mathfrak S_n.$\\[2pt]

Let $\HH := \HH(B_n)$ be the Iwahori-Hecke algebra of type  $B_n$, which is
the associative algebra over $A:=\Z[q,q\inv]$, generated by
$\{T_i\}_{0 \leq i < n}$ with defining relations~:

\begin{equation*} \label{Eq: HB}
\begin{aligned}
&\hbox{- quadratic relations} :  (T_i -q)(T_i+q\inv)=0 \hbox{ for } 0\le i\le n-1, \\
&\hbox{- braid relations} : \left\{
  \begin{array}{ll}
   T_iT_j = T_jT_i & \hbox{for } |i-j| > 1\ \ (i,j=0,\ldots,n-1), \\
   T_iT_{i+1}T_i=T_{i+1}T_iT_{i+1}& \hbox{for } 1\leq i< n-1,  \\
    T_1T_0T_1T_0=T_0T_1T_0T_1 & \end{array}
\right. \\[5pt]
\end{aligned}
\end{equation*}
where $q$ is a parameter and $T_i:=T_{s_i}$, the generator corresponding to the reflection $s_i$.\\[.1pt]

Let $\tl(B_n)$ $(n\ge 2)$ be the Temperley-Lieb algebra of type $B_n$, which is a quotient of the Hecke algebra $\HH(B_n)$. $\tl(B_n)$ is the associative algebra over the complex field $\C$, generated by $X=\{E_0,E_1, \ldots, E_{n-1}\}$
with defining relations~:
\begin{eqnarray}\label{relation_tlB}
& E_i^2=\delta E_i &\mbox{ for } 0\le i\le n-1, \nonumber\\
R_{\tl(B_n)}:& E_iE_j = E_jE_i &\mbox{ for } i> j+1
, \\
& E_iE_jE_i=E_i &\mbox{ for } j=i\pm 1,\ i,j>0,  \nonumber\\
& E_iE_jE_iE_j=2E_iE_j &\mbox{ for } \{i,j\}=\{0,1\},  \nonumber
\end{eqnarray}
where $\delta\in\C$ is a parameter.
Fix our monomial order $<$ to be the degree-lexicographic order with $$
E_0< E_1<\cdots<E_{n-1}.$$ We write $E_{i,j}=E_iE_{i-1}\cdots E_j$ for $i\ge j\ge 0$, and $E^{i,j}=E_iE_{i+1}\cdots E_j$ for $i\le j$.
By convention, $E_{i,i+1}=1$ and $E^{i+1,i}=1$ for $i\ge 0$. We can easily prove
the following lemma.

\begin{lem}{\rm (\cite[\S4]{KimSSLeeDI})} \label{relation_tlB_Lemma}
The following relations hold in $\tl(B_n)$:
$$E_{i,0}E^{1,j}E_i=E_{i-2,0}E^{1,j}E_i$$ for $i>j+1\ge 1$.
\end{lem}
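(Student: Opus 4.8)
The plan is to prove the identity by a direct rewriting of the monomial $E_{i,0}E^{1,j}E_i$ inside $\tl(B_n)$, using only the commutativity relations $E_aE_b=E_bE_a$ for $|a-b|>1$ and the untwisting relation $E_aE_{a-1}E_a=E_a$ (valid for $a\ge 2$, so that both $a,a-1>0$). The special relation $E_0E_1E_0E_1=2E_0E_1$ will not be needed: the only $E_0$ occurring sits inside the descending factors $E_{i,0}$ and $E_{i-2,0}$, and it is merely transported past generators $E_a$ with $a\ge 2$, never multiplied against $E_1$ in the critical position. The hypothesis $i>j+1\ge 1$ supplies the two index facts I will use repeatedly: first $i\ge 2$, so the untwisting relation applies to the triple $E_iE_{i-1}E_i$; and second $i\ge j+2$, so that every index occurring in $E^{1,j}=E_1E_2\cdots E_j$ and in $E_{i-2,0}=E_{i-2}\cdots E_0$ differs from $i$ by at least $2$ and hence commutes with $E_i$.

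First I would peel off the top two factors of the descending word, writing $E_{i,0}=E_iE_{i-1}E_{i-2,0}$, so the left-hand side reads $E_iE_{i-1}E_{i-2,0}E^{1,j}E_i$. Next I would slide the trailing $E_i$ leftward: by commutativity it passes through all of $E^{1,j}$ and then through all of $E_{i-2,0}$ (every index there being at most $i-2$), landing it adjacent to $E_{i-1}$ to produce $E_iE_{i-1}E_iE_{i-2,0}E^{1,j}$. The untwisting relation now collapses the prefix $E_iE_{i-1}E_i$ to a single $E_i$, leaving $E_iE_{i-2,0}E^{1,j}$. Finally I would run the same commutations in reverse, moving this $E_i$ back to the right past $E_{i-2,0}$ and then past $E^{1,j}$, which yields exactly $E_{i-2,0}E^{1,j}E_i$. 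Chaining these equalities gives the claim.

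The boundary value $j=0$ is absorbed without extra work: there $E^{1,0}=1$ by convention and the computation degenerates to $E_{i,0}E_i=E_{i-2,0}E_i$, established by the same three moves. I do not expect a genuine obstacle here; the argument is an elementary monomial manipulation. The only point demanding care is the bookkeeping of the inequalities that license each move — verifying that $i\ge 2$ permits the untwisting step and that $i\ge j+2$ permits every commutation of $E_i$ past the factors of $E^{1,j}$ and $E_{i-2,0}$ — and both follow at once from $i>j+1\ge 1$.
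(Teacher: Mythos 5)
Your proof is correct and follows essentially the same route as the paper: the paper proves the direct generalization (Lemma \ref{relation_ctl_Lemma}, for $\tl(d,n)$) by exactly these moves — writing $E_{i,1}E_0^kE^{1,j}E_i=(E_iE_{i-1}E_i)E_{i-2,1}E_0^kE^{1,j}$ via the commutations licensed by $i>j+1$, collapsing $E_iE_{i-1}E_i=E_i$, and commuting $E_i$ back to the right. Your bookkeeping of the inequalities ($i\ge 2$ for untwisting, $i\ge j+2$ for the commutations, including $E_i$ past $E_0$) is exactly what the paper's argument relies on.
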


Let $\widehat{R}_{\tl(B_n)}$ be the collection of defining relations (\ref{relation_tlB})
combined 
with (\ref{relation_tlA_GS})
and the relations in Lemma \ref{relation_tlB_Lemma}.
From this, we denote by $M_{\tl(B_n)}$ the set of $\widehat{R}_{\tl(B_n)}$-standard monomials.
Among the monomials in $M_{\tl(B_n)}$, we consider the monomials
which are not $\widehat{R}_{\tl(A_{n-1})}$-standard.
That is, we take only $\widehat{R}_{\tl(B_n)}$-standard monomials which are not of the form (\ref{monomial_tlA}).
This set is denoted by $M_{\tl(B_n)}^0$.
Note that each monomial in $M_{\tl(B_n)}^0$ contains $E_0$. We decompose the set $M_{\tl(B_n)}^0$ into two parts as follows~:
$$M_{\tl(B_n)}^0=M_{\tl(B_n)}^{0+}\amalg M_{\tl(B_n)}^{0-}$$
where the monomials in $M_{\tl(B_n)}^{0+}$ are of the form
\begin{equation}\label{monomial_tlB+}
E_0E_{i_1,j_1}E_{i_2,j_2}\cdots E_{i_p,j_p}\quad (0\le p\le n-1) \nonumber
\end{equation}
with $$\begin{aligned}
&1\le i_1<i_2<\cdots <i_p\le n-1,\quad
0\le j_1\le j_2\le \cdots \le j_p\le n-1,\\
&i_1\ge j_1,\ i_2\ge j_2,\ \ldots,\ i_p\ge j_p,\ \mbox{ and }\\
&j_k>0\ (1\le k<p)\ \mbox{ implies } j_k<j_{k+1}\end{aligned}$$
(the case of $p=0$ is the monomial $E_0$),
and the monomials in $M_{\tl(B_n)}^{0-}$ are of the form
\begin{equation}
E_{i_1,j_1}'E_{i_2,j_2}\cdots E_{i_p,j_p}\quad (1\le p\le n-1) \nonumber
\end{equation}
with $$E_{i,j}'=E_{i,0}E^{1,j}$$ and the same restriction on $i$'s and $j$'s as above.
It can be easily checked that
$M_{\tl(B_n)}^0$ is the set of $\widehat{R}_{\tl(B_n)}$-standard monomials which are not $\widehat{R}_{\tl(A_{n-1})}$-standard.
\medskip

Counting the number of elements in $M_{\tl(B_n)}^0$, we obtain the following theorem.

\begin{thm}{\rm (\cite[\S4]{KimSSLeeDI})} 
The algebra $\tl(B_n)$ has a \GS basis $\widehat{R}_{\tl(B_n)}$ with respect to our monomial order $<$:
\begin{eqnarray}\label{relation_tlB_GS}
& E_i^2-\delta E_i &\mbox{ for } 0\le i\le n-1, \nonumber\\
& E_iE_j - E_jE_i &\mbox{ for } i> j+1, \nonumber\\
\widehat{R}_{\tl(B_n)}: & E_{i,j}E_i-E_{i-2,j}E_i &\mbox{ for } i>j>0, \nonumber\\
& E_jE_{i,j}-E_jE_{i,j+2} &\mbox{ for } i>j>0. \nonumber\\
& E_iE_jE_iE_j-2E_iE_j &\mbox{ for } \{i,j\}=\{0,1\},  \nonumber\\
& E_{i,0}E^{1,j}E_i-E_{i-2,0}E^{1,j}E_i &\mbox{ for } i>j+1\ge 1. \nonumber
\end{eqnarray}
The cardinality of the set $M_{\tl(B_n)}$, i.e. the set of  $\widehat{R}_{\tl(B_n)}$-standard monomials, is
$$\dim \tl(B_n)=(n+2)C_n-1.$$
\end{thm}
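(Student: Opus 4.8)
The plan is to establish the two assertions of the theorem in turn: first that $\widehat{R}_{\tl(B_n)}$ is a \GS basis, and then that the number of $\widehat{R}_{\tl(B_n)}$-standard monomials equals $(n+2)C_n-1$. For the counting I would take as given the characterization set up just before the statement, namely that a monomial is $\widehat{R}_{\tl(B_n)}$-standard precisely when it is either $\widehat{R}_{\tl(A_{n-1})}$-standard (no occurrence of $E_0$) or lies in $M_{\tl(B_n)}^0=M_{\tl(B_n)}^{0+}\amalg M_{\tl(B_n)}^{0-}$.

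For the \GS property I would invoke the Composition Lemma (Theorem \ref{cor-1}): it suffices to check that $\widehat{R}_{\tl(B_n)}$ is closed under composition, i.e. every composition of intersection $(p,q)_w$ and of inclusion $(p,q)_{a,b}$ reduces to $0$ modulo $(\widehat{R}_{\tl(B_n)};w)$. Since $\widehat{R}_{\tl(A_{n-1})}$ is already known to be a \GS basis \cite{KLLO}, and $\widehat{R}_{\tl(B_n)}$ is obtained from it by adjoining the generator $E_0$, the quadratic relation $E_0^2-\delta E_0$, the degree-four relation $E_iE_jE_iE_j-2E_iE_j$ for $\{i,j\}=\{0,1\}$, and the relation of Lemma \ref{relation_tlB_Lemma}, all compositions internal to the type-$A$ part are already resolved. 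Thus I need only treat the overlaps in which at least one member is new. Recording the relevant leading monomials ($E_0^2$; the words $E_1E_0E_1E_0$ and $E_0E_1E_0E_1$; and $E_{i,0}E^{1,j}E_i$ for $i>j+1$), one enumerates the finitely many self-overlaps of the degree-four relations, their overlaps with $E_0^2$, $E_1^2$, and with the relation of Lemma \ref{relation_tlB_Lemma}, together with the overlaps of the latter against the commutation relation $E_iE_j-E_jE_i$ and the untwisting relations $E_{i,j}E_i-E_{i-2,j}E_i$ and $E_jE_{i,j}-E_jE_{i,j+2}$; each resulting composition is then rewritten to $0$ directly.

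For the dimension I would count the two families. Both $M_{\tl(B_n)}^{0+}$ and $M_{\tl(B_n)}^{0-}$ are indexed by the same admissible data $(i_1<\cdots<i_p;\,j_1\le\cdots\le j_p)$ with $i_k\ge j_k$ and the $j_k$ strictly increasing once positive, the only difference being that $M_{\tl(B_n)}^{0+}$ also admits the empty datum $p=0$ (the monomial $E_0$); hence $|M_{\tl(B_n)}^{0+}|=|M_{\tl(B_n)}^{0-}|+1$. I would compute the common count by grouping the data according to the number $m$ of vanishing $j_k$: after discarding the zeros, the remaining pairs of strictly increasing sequences with $i_k\ge j_k$ are enumerated by ballot (Catalan-triangle) numbers, and summing the resulting products of binomial coefficients over $m$ and $p$ collapses, via the Chu--Vandermonde identity, to $|M_{\tl(B_n)}^{0+}|=\tfrac12\binom{2n}{n}$ and $|M_{\tl(B_n)}^{0-}|=\tfrac12\binom{2n}{n}-1$. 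Adding the three contributions and using $\binom{2n}{n}=(n+1)C_n$ yields $C_n+\bigl(\binom{2n}{n}-1\bigr)=(n+2)C_n-1$.

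The composition checking is bulky but routine; the genuine obstacle is the clean evaluation of the counts. On the algebraic side the delicate compositions are those involving the relation of Lemma \ref{relation_tlB_Lemma}, whose leading monomial $E_{i,0}E^{1,j}E_i$ has variable length, so its overlaps with itself and with the shorter relations require the most care. On the combinatorial side the subtle point is the conditional strictness of the $j_k$ (equalities allowed only at $0$): this is exactly what turns the naive binomial sum into the ballot-number sum whose telescoping produces the factor $\tfrac12\binom{2n}{n}$, and hence the final closed form $(n+2)C_n-1$.
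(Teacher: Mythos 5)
Your proposal is correct, and its algebraic half is exactly the paper's method: the Gr\"obner--Shirshov property is established via the Composition Lemma by checking only those compositions that involve a relation containing $E_0$, the type-$A$ part being already closed. (Note that the paper itself does not reprove this theorem --- it defers to \cite[\S4]{KimSSLeeDI} --- and its only full composition check is for the generalization, Theorem \ref{MainThm}; comparing with case II of that proof, be aware that the ``new'' relations include not just $E_0^2-\delta E_0$, the two degree-four relations and Lemma \ref{relation_tlB_Lemma}, but also the commutations $E_iE_0-E_0E_i$ for $i\ge 2$, whose overlaps must appear in your enumeration even though they resolve trivially.) Where you genuinely diverge is the count. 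The paper's route, both in \cite{KimSSLeeDI} and in the $d$-general Remark after Theorem \ref{MainThm}, associates monomials to lattice paths and groups them by the largest index attached to an $E_0$-block, each group being counted by a Catalan-triangle number, which yields $(d-1)\mathfrak{F}_{n,n-1}(d)+(d-1)(C_n-1)+C_n$ and then specializes at $d=2$. You instead exploit the fact, immediate from the paper's parametrization, that $M_{\tl(B_n)}^{0+}$ and $M_{\tl(B_n)}^{0-}$ carry identical index data up to the single datum $p=0$, so $|M_{\tl(B_n)}^{0+}|=|M_{\tl(B_n)}^{0-}|+1$, and then evaluate one count in closed form as $\tfrac12\binom{2n}{n}$. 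Your claims check out (e.g.\ $10$ and $9$ for $n=3$, $35$ and $34$ for $n=4$), and your closed form is equivalent to the identity $\mathfrak{F}_{n,n-1}(2)=nC_n$ that underlies the paper's own $d=2$ specialization; the arithmetic $C_n+\binom{2n}{n}-1=(n+2)C_n-1$ is right. What each approach buys: your pairing makes the ``$-1$'' in the formula transparent and shortens the $B_n$ computation, but it is tailored to $d=2$ --- for general $d$ the two families in the paper's decomposition are weighted by powers of $d$ and are far from equinumerous, which is why the paper's Dyck-path/Catalan-triangle grouping is the one that scales to $G(d,1,n)$. Also, your evaluation of $\tfrac12\binom{2n}{n}$ still requires ballot numbers (the number of below-diagonal paths from $(a,0)$ to $(n,n)$ is $C(n,n-a)$) plus a binomial summation, so you have not avoided that machinery, only reorganized it; the summation details are the one place in your sketch that still must be written out, but the target identities are true, so nothing fails.
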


\begin{ex}\label{STmonoTypeB}

For $n=3$, we have $|M_{\tl(B_3)}|= (3+2)C_3 -1=24$.

We enumerate the standard monomials in $M_{\tl(B_3)}^0= M_{\tl(B_3)}\setminus M_{\tl(A_3)}$, that is, the standard monomials containing $E_0$, of cardinality $24-5=19$ as follows (using the notation $E_{i,j}':=E_{i,0}E^{1,j}$)~:

\begin{eqnarray*}
&E_0,\ E_0E_{1,0}, E_{1,0},\ E_0E_1, E_1'=E_1E_0E_1,\ E_0E_{2,0}, E_{2,0},\ E_0E_{2,1},\\
&E_{2,1}'=E_2E_1E_0E_1, E_0E_2, E_2'=E_2E_1E_0E_1E_2,\  E_0E_{1,0}E_{2,0}, E_{1,0}E_{2,0},\\
& E_0E_{1,0}E_{2,1}, E_{1,0}E_{2,1}, E_0E_{1,0}E_2, E_{1,0}E_2, E_0E_1E_2, E_1'E_2=E_1E_0E_1E_2.
\end{eqnarray*}

\end{ex}


\bigskip
\section{Gr\"{o}bner-Shirshov bases for Temperley-Lieb algebras of the complex reflection group of type $G(d,1,n)$}

In this section, we try to generalize the result of the previous section to the
complex reflection group of type $G(d,1,n)$, where 
$d\ge 2$ and $n\ge 2$.
Let $W_n := G(d,1,n)$ be the wreath product $(\Z/d\Z)^n\rtimes \mathfrak S_n$ of the cyclic group $\Z/d\Z$ and the symmetric group $\mathfrak S_n$, with the following diagram~:

$$\diaggdur$$

In other words, $W_n$ is the group with generators $\{s_i\}_{0\leq i < n}$ and the following defining relations~:

\begin{equation*} \label{Eq: WGd1n}
\begin{aligned}
&
s_i^2= 1\,\,\hbox{for } 1\le i\le n-1\,\,\quad \hbox{and}\,\, s_0^d=1,\\
&\hbox{braid relations} : \left\{
  \begin{array}{ll}
   s_is_j = s_js_i & \hbox{if } |i-j| > 1\ \ (i,j=0,\ldots,n-1), \\
   s_is_{i+1}s_i=s_{i+1}s_is_{i+1}& \hbox{if } 1\leq i< n-1,  \\
   s_1s_0s_1s_0=s_0s_1s_0s_1 & \end{array}
 \right. \\[5pt]
\end{aligned}
\end{equation*}

Let $\HH_n := \HH(W_n)$ be the Ariki-Koike algebra of type  $G(d,1,n)$, 
cyclotomic Hecke algebra 
which is the associative algebra over $A:=\Z[\zeta,q,q\inv]$, generated by
$\{T_i\}_{0\leq i < n}$ with defining relations~:

\begin{equation*} \label{Eq: HB}
\begin{aligned}
& (T_i -q)(T_i+q\inv)=0 \hbox{ for } 0\le i\le n-1, \\
& (T_0-q^{m_0})(T_0-q^{m_1}\zeta)(T_0-q^{m_2}\zeta^2)\cdots (T_0-q^{m_{d-1}}\zeta^{d-1})=0, \\
&\hbox{braid relations} : \left\{
  \begin{array}{ll}
   T_iT_j = T_jT_i & \hbox{for } |i-j| > 1\ \ (i,j=0,\ldots,n-1), \\
   T_iT_{i+1}T_i=T_{i+1}T_iT_{i+1}& \hbox{for } 1\leq i< n-1,  \\
    T_1T_0T_1T_0=T_0T_1T_0T_1 & \end{array}
\right. \\[5pt]
\end{aligned}
\end{equation*}
where $\zeta$ is a primitive $d^{th}$ root of unity and $T_i:=T_{s_i}$, the generator corresponding to the reflection $s_i$, and $m_i \in \N$.\\[.1pt]

Similarly, we define
the Temperley-Lieb algebra of 
$G(d,1,n)$ $(d,n\ge 2)$, denoted by $\tl(d,n)$, as a quotient of the Ariki-Koike algebra of $G(d,1,n)$ $(d,n\ge 2)$.

\begin{defn}
The Temperley-Lieb algebra $\tl(d,n)$ is the associative algebra over the complex field $\C$, generated by $X=\{E_0,E_1, \ldots, E_{n-1}\}$
with defining relations:
\begin{eqnarray}\label{relation_ctl}
& E_i^2=\delta E_i &\mbox{ for } 1\le i\le n-1, \nonumber\\
&  E_0^d=(d-1)\delta E_0,  \nonumber\\
R_{\tl(d,n)}
:& E_iE_j = E_jE_i &\mbox{ for } i> j+1
, \\
& E_iE_jE_i=E_i &\mbox{ for } j=i\pm 1,\ i,j>0,  \nonumber\\
& E_0E_1E_0^kE_1=(k+1)E_0E_1&\mbox{ for } 1\le k <d,  \nonumber\\
& E_1E_0^kE_1E_0=(k+1)E_1E_0&\mbox{ for } 1\le k <d,  \nonumber
\end{eqnarray}
where $\delta
\in\C$ is a parameter.
\end{defn}

Fix the same monomial order $<$ as in the previous section, i.e. the degree-lexicographic order with $ E_0< E_1<\cdots<E_{n-1}$.

\begin{lem} \label{relation_ctl_Lemma}
The following relations hold in $\tl(d,n)$;
$$E_{i,1}E_0^kE^{1,j}E_i=E_{i-2,1}E_0^kE^{1,j}E_i$$
for $i>j+1\ge 1$ and $1\le k<d$.
\end{lem}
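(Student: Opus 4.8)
The plan is to prove this exactly as in the type $B_n$ case (Lemma~\ref{relation_tlB_Lemma}, which is the special case $d=2$, $k=1$), using only the commutative relations $E_iE_j=E_jE_i$ (for $i>j+1$) together with the untwisting relation $E_iE_{i-1}E_i=E_i$; none of the $E_0$-specific relations enter. The central observation is that the entire ``low-index block'' $B:=E_{i-2,1}\,E_0^k\,E^{1,j}$ commutes with $E_i$. Indeed, every generator occurring in $B$ has index at most $i-2$: the factors of $E_{i-2,1}=E_{i-2}E_{i-3}\cdots E_1$ have indices in $\{1,\dots,i-2\}$, the factor $E_0$ has index $0$, and the hypothesis $i>j+1$ forces $j\le i-2$, so the factors of $E^{1,j}=E_1E_2\cdots E_j$ have indices in $\{1,\dots,i-2\}$ as well. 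Since $i\ge 2$, each such index $m\le i-2$ satisfies $i>m+1$, whence $E_iE_m=E_mE_i$, and therefore $E_iB=BE_i$.

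With this in hand the computation is short. First I would split off the leading run of $E_{i,1}$ as
\begin{equation*}
E_{i,1}=E_iE_{i-1}E_{i-2,1},
\end{equation*}
so that the left-hand side becomes $E_iE_{i-1}\,B\,E_i$. Moving the trailing $E_i$ leftward through $B$ (legitimate by the commutation claim) yields $E_iE_{i-1}E_i\,B$, and applying the untwisting relation $E_iE_{i-1}E_i=E_i$ collapses this to $E_i\,B$. Finally I would transport this $E_i$ back to the right through $B$, again by the commutation claim, to obtain $B\,E_i=E_{i-2,1}\,E_0^k\,E^{1,j}\,E_i$, which is exactly the right-hand side. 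Note that the untwisting step is valid because $i\ge 2$ gives $i-1\ge 1>0$, so both indices are positive as the relation requires.

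The argument is routine, so there is no genuine obstacle; the only point demanding care is the index bookkeeping in the commutation claim, which rests squarely on the hypothesis $i>j+1$ (equivalently $j\le i-2$). One should also verify the boundary case $i=2$, where the conventions $E_{0,1}=1$ and $E^{1,0}=1$ apply and $j=0$: the identity reduces to $E_2E_1E_0^kE_2=E_0^kE_2$, which follows by commuting $E_0^k$ past the final $E_2$ and then untwisting. I would emphasize that, just as in the type $B_n$ lemma, the power $E_0^k$ plays no active role beyond being carried rigidly past $E_i$; this is precisely what makes a single proof uniform in $k$.
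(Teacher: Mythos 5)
Your proof is correct and is essentially the paper's own argument: both split off $E_{i,1}=E_iE_{i-1}E_{i-2,1}$, commute the trailing $E_i$ leftward past the low-index block $E_{i-2,1}E_0^kE^{1,j}$ (valid since $i>j+1$ forces every index there to be at most $i-2$), collapse via $E_iE_{i-1}E_i=E_i$, and commute $E_i$ back to the right. Your extra care with the boundary case $i=2$ and the conventions $E_{0,1}=E^{1,0}=1$ is a welcome refinement but not a different method.
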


\begin{proof} 
Since $2\le i\le n-1$ and $0\le j\le i-2$, we calculate that $$
E_{i,1}E_0^kE^{1,j}E_i=(E_iE_{i-1}E_i)E_{i-2,1}E_0^kE^{1,j}=E_iE_{i-2,1}E_0^kE^{1,j}=E_{i-2,1}E_0^kE^{1,j}E_i$$
by the commutative relations and $E_iE_{i-1}E_i=E_i$.
%
\end{proof}

The set of defining relations (\ref{relation_ctl})
combined with (\ref{relation_tlA_GS}) and the relations in Lemma \ref{relation_ctl_Lemma}
is denoted by $\widehat{R}_{\tl(d,n)}$.
%
%
We enumerate $\widehat{R}_{\tl(d,n)}$-standard monomials containing $E_0$ considering the following two types of standard monomials.

The first type of standard monomials is~:
\begin{equation}\label{monomial_ctl0+}
%
(E_{i_1,1}E_0^{k_1})(E_{i_2,1}E_0^{k_2})\cdots (E_{i_q,1}E_0^{k_q})E_{i_{q+1},j_{q+1}}\cdots E_{i_p,j_p}
\quad (1\le p\le n-1, 1\le q\le p)
\end{equation}
with $$\begin{aligned}
&
1\le k_1,k_2,\ldots,k_q<d,\quad 0\le i_1<i_2<\cdots<i_q<i_{q+1}<\cdots <i_p\le n-1,\\
&1\le j_{q+1}< j_{q+2}< \cdots < j_p\le n-1,\quad i_{q+1}\ge j_{q+1},\ i_{q+2}\ge j_{q+2},\ \ldots,\ i_p\ge j_p.
\end{aligned}$$

In a similar way as we did in the proof of \cite[Theorem 4.2]{KimSSLeeDI},
to count the number of standard monomials of the above form (\ref{monomial_ctl0+}),
we associate those standard monomials bijectively to the paths defined below.\\

To each monomial $$
 (E_{i_1,1}E_0^{k_1})\cdots (E_{i_q,1}E_0^{k_q})E_{i_{q+1},j_{q+1}}\cdots E_{i_p,j_p},$$
we associate
a unique path, which we call a {\it $G(d,1,n)$-Dyck path},  
\begin{equation*} 
(i_1,0)^{k_1}\rightarrow \cdots \rightarrow (i_q,0)^{k_q} \rightarrow (i_{q+1},j_{q+1})\rightarrow \cdots \rightarrow (i_p,j_p)\rightarrow (n,n).
\end{equation*}
Here,
a path consists of moves to the east or to the north, {\em not above the diagonal} in the lattice plane.
The move from $(i,j)$ to $(i',j')$ ($i<i'$ and $j<j'$) is a concatenation of eastern moves followed by northern moves.
As an example, the monomial $E_0^2E_{1,0}E_{2,1}$ in $\tl(3,3)$ corresponds to
the figure below $$
\textbf{A}(0,0)^2\rightarrow \textbf{B}(1,0) {\boldsymbol{\rightarrow}}\textbf{C}(2,1)\rightarrow (3,3).$$
\vskip-1cm
\begin{table}[h]
\begin{tikzpicture}[line cap=round,line join=round,>=triangle 45,x=1.0cm,y=1.0cm]
\clip(-1.98,-1.00) rectangle (5,3.4);
\draw [line width=1.2pt] (0.,0.)-- (1.02,0.005);
\draw (0.7,-0.155) node[anchor=north west] {(1,0)};
\draw (1.7,1.7) node[anchor=north west] {(2,1)};
\draw (3.,3.2) node[anchor=north west] {(3,3)};
\draw [line width=1.2pt] (1.02,0.005)-- (2.,0.005);
\draw [line width=1.2pt] (2.,0.005)-- (2.,1.005);
\draw [line width=1.2pt] (2.,1.005)-- (3.,1.025);
\draw [line width=1.2pt] (3.,1.025)-- (2.998447893569847,3.0342793791574283);
\draw [color=ffffff](1.52,2.8) node[anchor=north west] {C};
\draw (2.,1.) node[anchor=north west] {\textbf{C}};
\draw (-0.7, 0.479) node[anchor=north west] {\textbf{A}};
\draw (0.9,0.585) node[anchor=north west] {\textbf{B}};
\draw [line width=1.2pt,dotted] (0.,0.)-- (2.998447893569847,3.0342793791574283);
\draw (-0.7,-0.155) node[anchor=north west] {$\mathbf{(0,0)^2}$};
\begin{scriptsize}
\draw [fill=qqqqff] (0.,0.) ++(-2.5pt,0 pt) -- ++(2.5pt,2.5pt)--++(2.5pt,-2.5pt)--++(-2.5pt,-2.5pt)--++(-2.5pt,2.5pt);
\draw [fill=qqqqff] (2.998447893569847,3.0342793791574283) circle (2.5pt);
\draw [fill=qqqqff] (1.02,0.005) circle (2.5pt);
\draw [fill=qqqqff] (2.,1.005) circle (2.5pt);
\draw [fill=qqqqff] (2.,0.005) circle (2.5pt);
\draw [fill=qqqqff] (3.,1.025) circle (2.5pt);
\end{scriptsize}
\end{tikzpicture}
\caption{The path corresponding to $E_0^2E_{1,0}E_{2,1}$ in $\tl(3,3)$}\label{DyckPathGdn}
\end{table}

\noindent
We note that in the move from $B(1,0)$ to $C(2,1)$,  there is a concatenation of an eastern move followed by a northern move as well as in the move from $C(2,1)$ to
the plot $(3,3)$ as shown in Table~\ref{DyckPathGdn}.

\medskip
The second type of standard monomials is:
\begin{equation}\label{monomial_ctl0-}
(E_{i_1,1}E_0^kE^{1,j_1})E_{i_2,j_2}E_{i_3,j_3}\cdots E_{i_p,j_p}\quad (1\le p\le n-1)
\end{equation}
with 
$$\begin{aligned}
&1\le k<d,\quad 1\le i_1<i_2<\cdots <i_p\le n-1,\\
&1\le j_1< j_2< \cdots <j_p\le n-1,\quad i_1\ge j_1,\ i_2\ge j_2,\ \ldots,\ i_p\ge j_p
.\end{aligned}$$

%

\medskip
Now we let $M_{\tl(d,n)}$ be the set of standard monomials of the forms (\ref{monomial_ctl0+}) and (\ref{monomial_ctl0-}) combined with (\ref{monomial_tlA}).

\medskip
The following is our main theorem.

\begin{thm} \label{MainThm}
The algebra $\tl(d,n)$ has a \GS basis $\widehat{R}_{\tl(d,n)}$ with respect to our monomial order $<$ \rm (i.e. degree-lexicographic order with $ E_0< E_1<\cdots<E_{n-1}$\rm )~:
\begin{eqnarray}\label{relation_ctl_GS}
&  E_0^d-(d-1)\delta E_0, \nonumber\\
& E_iE_0 - E_0E_i &\mbox{ for } 1< i\le n-1, \nonumber\\
& E_0E_1E_0^kE_1-(k+1)E_0E_1  &\mbox{ for } 1\le k <d,  \nonumber\\
& E_1E_0^kE_1E_0-(k+1)E_1E_0 &\mbox{ for } 1\le k <d,  \nonumber\\
\widehat{R}_{\tl(d,n)}: & E_{i,1}E_0^kE^{1,j}E_i-E_{i-2,1}E_0^kE^{1,j}E_i &\mbox{ for } i>j+1\ge 1, \nonumber\\
& E_i^2-\delta E_i &\mbox{ for } 1\le i\le n-1, \nonumber\\
& E_iE_j - E_jE_i &\mbox{ for } i> j+1>1, \nonumber\\
& E_{i,j}E_i-E_{i-2,j}E_i &\mbox{ for } i>j>0, \nonumber\\
& E_jE_{i,j}-E_jE_{i,j+2} &\mbox{ for } i>j>0. \nonumber
\end{eqnarray}
The cardinality of the set $M_{\tl(d,n)}$, i.e. the set of $\widehat{R}_{\tl(d,n)}$-standard monomials, is
$$ \dim \tl(d,n)= 
(d-1)(\mathfrak{F}_{n,n-1}(d)-1)+dC_n $$
 where $\mathfrak{F}_{n,k}(x)=\sum_{s=0}^kC(n,s)x^{k-s}$ is the $(n,k)$th Catalan triangle polynomial,
introduced in \cite[\S2.3]{LO16C}.
\end{thm}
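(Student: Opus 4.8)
The plan is to establish the two assertions in turn: that $\widehat{R}_{\tl(d,n)}$ is a \GS basis, and then that its standard monomials are exactly the three listed families, which I finally enumerate.

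For the first assertion I would apply the Composition Lemma (Theorem~\ref{cor-1}): since $\widehat{R}_{\tl(d,n)}$ generates the same two-sided ideal as the defining relations \eqref{relation_ctl} — the type $A_{n-1}$ relations \eqref{relation_tlA_GS} being consequences via the cited Proposition applied to the subalgebra on $E_1,\dots,E_{n-1}$, and the long relation coming from Lemma~\ref{relation_ctl_Lemma} — it suffices to prove the set is closed under composition. I would first record the leading monomial of each relation under the degree-lexicographic order ($E_0^d$, $E_iE_0$, $E_0E_1E_0^kE_1$, $E_1E_0^kE_1E_0$, $E_{i,1}E_0^kE^{1,j}E_i$, and the four type $A$ leading words $E_i^2,\,E_iE_j,\,E_{i,j}E_i,\,E_jE_{i,j}$), and then run through every overlap of two such words, checking that each composition of intersection $(p,q)_w$ and of inclusion $(p,q)_{a,b}$ reduces to $0$ modulo $(S;w)$. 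Overlaps occurring purely among the type $A$ relations already resolve by the cited Proposition, so the real work is the overlaps involving at least one of the five relations carrying $E_0$.

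This composition-closure verification is the main obstacle, since the relations $E_0^d$, $E_0E_1E_0^kE_1$, $E_1E_0^kE_1E_0$ and $E_{i,1}E_0^kE^{1,j}E_i$ overlap in many configurations and the parameter $k$ ($1\le k<d$) multiplies the cases. A typical check is the intersection of $E_0^d-(d-1)\delta E_0$ with $E_0E_1E_0^kE_1-(k+1)E_0E_1$ along one shared $E_0$, giving $w=E_0^dE_1E_0^kE_1$; reducing $(p,q)_w$ by $E_0E_1E_0^kE_1\to(k+1)E_0E_1$ and $E_0^d\to(d-1)\delta E_0$ makes the two surviving terms cancel. I expect the remaining ambiguities — $E_0^d$ against $E_1E_0^kE_1E_0$ and against the long relation, the two braid-type relations against one another and against $E_iE_0$ and $E_iE_j$, and the long relation against $E_i^2$ and $E_{i,j}E_i$ — to collapse by the same mechanism, repeatedly using $E_iE_{i-1}E_i=E_i$ and the identities \eqref{relation_ctl}. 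Completing and organizing this finite but sizeable case list is the crux.

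Once closure holds, the Composition Lemma yields that the $\widehat{R}_{\tl(d,n)}$-standard monomials form a basis, and I would then identify them with the forms \eqref{monomial_tlA}, \eqref{monomial_ctl0+}, \eqref{monomial_ctl0-}: irreducibility of these forms is immediate from the leading words, and conversely the normal-form reduction drives any monomial containing $E_0$ into one of the two $E_0$-shapes (the commutations $E_iE_0$ gather the $E_0$'s to the left, the braid relations fix the $E_0$-block, and the long relation with the type $A$ relations normalize the surrounding factors). Finally I would count $|M_{\tl(d,n)}| = C_n + (d-1)(C_n-1) + (d-1)\mathfrak{F}_{n,n-1}(d)$ and simplify to the stated formula by the identity $C_n+(d-1)(C_n-1)+(d-1)\mathfrak{F}_{n,n-1}(d)=dC_n+(d-1)(\mathfrak{F}_{n,n-1}(d)-1)$. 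Here the $C_n$ counts the type \eqref{monomial_tlA} monomials; the monomials \eqref{monomial_ctl0-} have index data ranging over the nonempty type $A$ configurations with one free power $k\in\{1,\dots,d-1\}$, giving $(d-1)(C_n-1)$; and the monomials \eqref{monomial_ctl0+} biject with the $G(d,1,n)$-Dyck paths, each lattice shape weighted by $(d-1)$ per vertex on the horizontal axis, so that $\sum_P (d-1)^{v(P)}=(d-1)\mathfrak{F}_{n,n-1}(d)$ by the Catalan triangle polynomial identities of \cite[\S2.3]{LO16C}. This last weighted path count is the one nontrivial combinatorial input; I would confirm it from the recursion for $\mathfrak{F}_{n,k}$ and check small cases (e.g.\ $d=2$ recovers $\dim\tl(B_n)=(n+2)C_n-1$).
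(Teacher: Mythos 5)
Your proposal follows essentially the paper's own route: reduce everything to closure under composition via the Composition Lemma, observe that the type-$A$ relations are already closed among themselves, check the overlaps involving the five $E_0$-relations, identify the standard monomials with the three families \eqref{monomial_tlA}, \eqref{monomial_ctl0+}, \eqref{monomial_ctl0-}, and count the family \eqref{monomial_ctl0+} by $G(d,1,n)$-Dyck paths and the Catalan triangle polynomial; indeed your one worked composition is exactly the paper's first nontrivial case, and your final arithmetic agrees with the paper's. Two concrete caveats. First, what you set aside as ``the crux'' is the entire content of the paper's proof: about twenty-five compositions organized into five groups (against $E_0^d$, against $E_iE_0$, against each of the two braid-type relations, and against the long relation). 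They all do collapse by the mechanism you describe, but your overlap inventory needs correction: $E_0^d$ has \emph{no} overlap with the long relation (its leading word $E_{i,1}E_0^kE^{1,j}E_i$ begins and ends with some $E_i$, $i\ge 2$, and contains at most $d-1$ consecutive $E_0$'s), whereas the long relation must additionally be composed with $E_iE_j$, with $E_iE_{\ell,i}$ and with $E_jE_{i,j}$, not only with $E_i^2$ and $E_{i,j}E_i$. Second, your weighted-path count is wrong as literally stated: if $v(P)$ counts all lattice points of the underlying shape on the horizontal axis, then at $d=2$ the sum $\sum_P(d-1)^{v(P)}$ collapses to $C_n$, not to $\mathfrak{F}_{n,n-1}(2)$. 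The weight $d-1$ must be attached only to the \emph{marked} axis vertices (the $E_0^{k_m}$-blocks), which form an arbitrary nonempty subset of the axis points strictly before the point where the path leaves the axis; equivalently, group by the last marked point $(\ell,0)$ as the paper does, obtaining $(d-1)d^{\ell}C(n,n-1-\ell)$ monomials for each $\ell$ and hence $(d-1)\mathfrak{F}_{n,n-1}(d)$ in total. With these repairs your argument coincides with the paper's proof.
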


\begin{proof}

Since the set of relations (\ref{relation_tlA_GS}) which do not contain $E_0$ is already closed under composition,
we have only to check for the relations containing $E_0$, to show that $\widehat{R}_{\tl(d,n)}$ is closed under composition.

\begin{enumerate}[{\rm (I)}]
\item First, consider the compositions between $E_0^d-(d-1)\delta E_0$ and another relation.

\begin{enumerate}[{\rm 1)}]
\item $
(E_iE_0 - E_0E_i)E_0^{d-1}-E_i\left(E_0^d-(d-1)\delta E_0\right)\\=-E_0E_iE_0^{d-1}+(d-1)\delta E_iE_0
=-E_i\left(E_0^d-(d-1)\delta E_0\right)=0$.

\item  $\left(E_0^d-(d-1)\delta E_0\right)E_1E_0^kE_1-E_0^{d-1}\left(E_0E_1E_0^kE_1-(k+1)E_0E_1\right)\\
=-(d-1)\delta E_0E_1E_0^kE_1+(k+1)E_0^dE_1\\
= -(d-1)\delta (k+1)E_0E_1+(k+1)(d-1)\delta E_0E_1 
=0$.

\item $\left(E_1E_0^kE_1E_0-(k+1)E_1E_0\right)E_0^{d-1}-E_1E_0^kE_1\left(E_0^d-(d-1)\delta E_0\right)\\
=-(k+1)E_1E_0^d+(d-1)\delta E_1E_0^kE_1E_0\\
=-(k+1)(d-1)\delta E_1E_0+(d-1)\delta (k+1)E_1E_0 
=0$.

\end{enumerate}

\medskip
\item Next, we take the relation $E_iE_0 - E_0E_i$ and another relation.
\begin{enumerate}[{\rm 1)}]
\item $(E_i^2-\delta E_i)E_0-E_i(E_iE_0 - E_0E_i)=-\delta E_iE_0+E_iE_0E_i
=E_0(-\delta E_0+E_i^2)=0$.

\item $(E_iE_0 - E_0E_i)E_1E_0^kE_1- E_i\left(E_0E_1E_0^kE_1-(k+1)E_0E_1\right)\\
    =-E_0E_iE_1E_0^kE_1+(k+1)E_iE_0E_1
=-E_i\left(E_0E_1E_0^kE_1-(k+1)E_0E_1\right)=0$.


\item $(E_{i,1}E_0^kE^{1,j}E_i-E_{i-2,1}E_0^kE^{1,j}E_i)E_0- E_{i,1}E_0^kE^{1,j}(E_iE_0-E_0E_i) \\ =-E_{i-2,1}E_0^kE^{1,j}E_iE_0+E_{i,1}E_0^kE^{1,j}E_0E_i\\
    =(-E_{i-2,1}E_0^kE^{1,j}E_i+E_{i,1}E_0^kE^{1,j}E_i)E_0=0$.

\item $(E_iE_j - E_jE_i)E_0-E_i(E_jE_0 - E_0E_j)=-E_jE_iE_0+E_iE_0E_j=0$.
\item $(E_{i,j}E_i - E_{i-2,j}E_i)E_0-E_{i,j}(E_iE_0 - E_0E_i)\\=-E_{i-2}E_iE_0+E_{i,j}E_0E_i
=(-E_{i-2}E_i+E_{i,j}E_i)E_0=0$.
\item $(E_jE_{i,j} - E_jE_{i,j+2})E_0-E_jE_{i,j+1}(E_jE_0 - E_0E_j)\\=-E_jE_{i,j+2}E_0+E_jE_{i,j+1}E_0E_j
=(-E_jE_{i,j+2}+E_jE_{i,j})E_0=0$.
\end{enumerate}

\medskip
\item Calculate for the relation $E_0E_1E_0^kE_1-(k+1)E_0E_1$ and another relation.
\begin{enumerate}[{\rm 1)}]
\item $\left(E_0E_1E_0^kE_1-(k+1)E_0E_1\right)E_0^kE_1-E_0E_1E_0^{k-1}\left(E_0E_1E_0^kE_1-(k+1)E_0E_1\right)\\
=-(k+1)E_0E_1E_0^kE_1+(k+1)E_0E_1E_0^kE_1 
=0$.
\item $\left(E_0E_1E_0^kE_1-(k+1)E_0E_1\right)E_0-E_0\left(E_1E_0^kE_1E_0-(k+1)E_1E_0\right)\\
    =-(k+1)E_0E_1E_0+(k+1)E_0E_1E_0=0$.
\item $\left(E_0E_1E_0^kE_1-(k+1)E_0E_1\right)E_0^kE_1E_0-E_0E_1E_0^k\left(E_1E_0^kE_1E_0-(k+1)E_1E_0\right)\\
=-(k+1)E_0E_1E_0^kE_1E_0+(k+1)E_0E_1E_0^kE_1E_0 
=0$.
\item $\left(E_1E_0^kE_1E_0-(k+1)E_1E_0\right)E_0^{k-1}E_1- E_1E_0^{k-1}\left(E_0E_1E_0^kE_1-(k+1)E_0E_1\right)\\
=-(k+1)E_1E_0^kE_1+(k+1)E_1E_0^kE_1=0$.
\item $\left(E_1E_0^kE_1E_0-(k+1)E_1E_0\right)E_1E_0^kE_1- E_1E_0^kE_1\left(E_0E_1E_0^kE_1-(k+1)E_0E_1\right)\\
=-(k+1)E_1E_0E_1E_0^kE_1+(k+1)E_1E_0^kE_1E_0E_1\\=-(k+1)E_1(k+1)E_0E_1+(k+1)^2E_1E_0E_1
=0$.


\item $\left(E_0E_1E_0^kE_1-(k+1)E_0E_1\right)E_1-E_0E_1E_0^k(E_1^2-\delta E_1)\\=-(k+1)E_0E_1^2+\delta E_0E_1E_0^kE_1
=-(k+1)\delta E_0E_1+\delta (k+1)E_0E_1 
=0$.
\item $\left(E_0E_1E_0^kE_1-(k+1)E_0E_1\right)E_{i,1}-E_0E_1E_0^k(E_1E_{i,1}-E_1E_{i,3})\\
    =-(k+1)E_0E_1E_{i,1}+E_0E_1E_0^kE_1E_{i,3}\\
=-(k+1)E_0E_1E_{i,1}+(k+1)E_0E_1E_{i,3}=-(k+1)E_0(E_1E_{i,1}-E_1E_{i,3})=0$.
\end{enumerate}

\medskip
\item Check for the relation $E_1E_0^kE_1E_0-(k+1)E_1E_0$ and another relation.
\begin{enumerate}[{\rm 1)}]
\item $\left(E_1E_0^kE_1E_0-(k+1)E_1E_0\right)E_0^{k-1}E_1E_0-E_1E_0^k\left(E_1E_0^kE_1E_0-(k+1)E_1E_0\right)\\
=-(k+1)E_1E_0^kE_1E_0+(k+1)E_1E_0^kE_1E_0 
=0$.


\item $(E_1^2-\delta E_1)E_0^kE_1E_0-E_1\left(E_1E_0^kE_1E_0-(k+1)E_1E_0\right)\\
=-\delta E_1E_0^kE_1E_0+(k+1)E_1^2E_0
=-\delta (k+1)E_1E_0+(k+1)\delta E_1E_0=0$.
\item $(E_iE_1-E_1E_i)E_0^kE_1E_0-E_i\left(E_1E_0^kE_1E_0-(k+1)E_1E_0\right)\\
    =-E_1E_iE_0^kE_1E_0+(k+1)E_iE_1E_0
=-E_i\left(E_1E_0^kE_1E_0-(k+1)E_1E_0\right)=0$.
\item $(E_1E_{i,1}-E_1E_{i,3})E_0^kE_1E_0-E_1E_{i,2}\left(E_1E_0^kE_1E_0-(k+1)E_1E_0\right)\\
=-E_1E_{i,3}E_0^kE_1E_0+(k+1)E_1E_{i,1}E_0
=-E_{i,3}\left(E_1E_0^kE_1E_0-(k+1)E_1E_0\right)=0$.
\end{enumerate}


\medskip
\item Finally, consider $E_{i,1}E_0^kE^{1,j}E_i-E_{i-2,1}E_0^kE^{1,j}E_i$ and another relation.
\begin{enumerate}[{\rm 1)}]
\item $(E_{i,1}E_0^kE^{1,j}E_i-E_{i-2,1}E_0^kE^{1,j}E_i)E_i-E_{i,1}E_0^kE^{1,j}(E_i^2-\delta E_i)\\
=-E_{i-2,1}E_0^kE^{1,j}E_i^2+\delta E_{i,1}E_0^kE^{1,j}E_i=\delta(-E_{i-2,1}E_0^kE^{1,j}E_i+E_{i,1}E_0^kE^{1,j}E_i)=0$.
\item $(E_{i,1}E_0^kE^{1,j}E_i-E_{i-2,1}E_0^kE^{1,j}E_i)E_j-E_{i,1}E_0^kE^{1,j}(E_iE_j-E_jE_i)\\
=-E_{i-2,1}E_0^kE^{1,j}E_iE_j+E_{i,1}E_0^kE^{1,j}E_jE_i\\
=(-E_{i-2,1}E_0^kE^{1,j}E_i+E_{i,1}E_0^kE^{1,j}E_i)E_j=0$.
\item $(E_{i,1}E_0^kE^{1,j}E_i-E_{i-2,1}E_0^kE^{1,j}E_i)E_{i-1,j}E_i-E_{i,1}E_0^kE^{1,j}(E_{i,j}E_i-E_{i-2,j}E_i)\\
=-E_{i-2,1}E_0^kE^{1,j}E_{i,j}E_i+ E_{i,1}E_0^kE^{1,j}E_{i-2,j}E_i\\
=(-E_{i-2,1}E_0^kE^{1,j}E_i+E_{i,1}E_0^kE^{1,j}E_i)E_{i-2,j}=0$.
\item $(E_{i,1}E_0^kE^{1,j}E_i-E_{i-2,1}E_0^kE^{1,j}E_i)E_{\ell,i}-E_{i,1}E_0^kE^{1,j}(E_iE_{\ell,i}-E_iE_{\ell,i+2})\\
=-E_{i-2,1}E_0^kE^{1,j}E_iE_{\ell,i}+ E_{i,1}E_0^kE^{1,j}E_iE_{\ell,i+2}\\
=(-E_{i-2,1}E_0^kE^{1,j}E_i+E_{i,1}E_0^kE^{1,j}E_i)E_{\ell,i+2}=0$.
\item $(E_jE_{i,j}-E_jE_{i,j+2})E_{j+1,1}E_0^kE^{1,j}E_i-E_j(E_{i,1}E_0^kE^{1,j}E_i-E_{i-2,1}E_0^kE^{1,j}E_i)\\
=-E_jE_{i,j+2}E_{j+1,1}E_0^kE^{1,j}E_i+E_jE_{i-2,1}E_0^kE^{1,j}E_i\\
=-E_j(E_{i,1}E_0^kE^{1,j}E_i-E_{i-2,1}E_0^kE^{1,j}E_i)=0$.
\end{enumerate}
\end{enumerate}

Hence Theorem \ref{cor-1} yields that $\widehat{R}_{\tl(d,n)}$ is a \GS basis for $\tl(d,n)$.
\end{proof}

\noindent {\it Remark}.
We can easily check that all $\widehat{R}_{\tl(d,n)}$-standard monomials are the ones in $M_{\tl(d,n)}$. 
Following the same procedure as in the proof of \cite[Theorem 4.2]{KimSSLeeDI},
 the number of monomials of the form (\ref{monomial_ctl0+}) is $$
(d-1)\sum_{s=0}^{n-1}C(n,s)d^{n-1-s}=(d-1)\mathfrak{F}_{n,n-1}(d)$$
by counting the monomials in (\ref{monomial_ctl0+}) according to $i_q(=n-1-s)=0,1,2,\ldots,n-1$ via $G(d,1,n)$-Dyck paths.

More precisely, if $i_q=0$ then the monomials are of the form $E_0^kE_{i_2,j_2}\cdots E_{i_p,j_p}$, so the number is $(d-1)C_n=(d-1)C(n,n-1)$. If
$i_q=1$ then the number of the monomials is $d(d-1)C(n,n-2)$.
If $i_q=2$ then the number is $d^2(d-1)C(n,n-3)$. In general, if $i_q=\ell\le n-1$ then the number is $d^\ell(d-1)C(n,n-1-\ell)$.

On the other hand, the number of monomials of the form (\ref{monomial_ctl0-}) is $$(d-1)(C_n-1).$$
Thus
the cardinality of the set $M_{\tl(d,n)}$ 
is
$$
\begin{aligned}\dim \tl(d,n)=|M_{\tl(d,n)}|&= 
C_n +  (d-1)\mathfrak{F}_{n,n-1}(d)  +(d-1)(C_n-1)\\
&= 
 (d-1)(\mathfrak{F}_{n,n-1}(d)-1)+dC_n.
\end{aligned}$$

In particular, with specialisation $d=2$, we recover the result for type $B_n$:
$$\dim \tl(B_n)=(n+2)C_n-1.$$

\begin{ex}\label{STmonoTypeA}
%

Consider the cases of $n=3$. The the dimension of $\tl(d,3)$ is $$
(d-1)(\mathfrak{F}_{3,2}(d)-1)+dC_n=(d-1)(d^2+3d+4)+5d=d^3+2d^2+6d-4.$$

Notice that, for $n=3$ and $d=3$, $|M_{\tl(3,3)}\setminus M_{\tl(B_3)}|=
59-24=35$.
The explicit and complete list of the standard monomials in $M_{\tl(3,3)}\setminus M_{\tl(B_3)}$, that is, the standard monomials containing $E_0^2$,
is as follows:
\begin{eqnarray*}
&
E_0^2,
E_0^2E_1,
E_0^2E_1E_2,
E_0^2E_{2,1},
E_0^2E_2,\\
&
E_0^2E_{1,0},
E_0^2E_{1,0}E_{2,1},
E_0^2E_{1,0}E_2,
E_1E_0^2,
(E_1E_0^2)E_{2,1},
(E_1E_0^2)E_2,
\\
&
E_0(E_1E_0^2),E_0(E_1E_0^2)E_{2,1},E_0(E_1E_0^2)E_2, 
E_0^2(E_1E_0^2),E_0^2(E_1E_0^2)E_{2,1},E_0^2(E_1E_0^2)E_2,\\
&E_0^2E_{2,0},
E_0^2E_{1,0}E_{2,0},
(E_1E_0^2)E_{2,0},
E_0(E_1E_0^2)E_{2,0},E_0^2(E_1E_0^2)E_{2,0},\\
&E_{2,1}E_0^2,
 E_0(E_{2,1}E_0^2),E_0^2(E_{2,1}E_0^2),
E_{1,0}(E_{2,1}E_0^2),(E_1E_0^2)(E_{2,1}E_0^2),
E_0E_{1,0}(E_{2,1}E_0^2), \\ &
E_0^2E_{1,0}(E_{2,1}E_0^2),E_0(E_1E_0^2)(E_{2,1}E_0^2),E_0^2(E_1E_0^2)(E_{2,1}E_0^2),\\
&E_1E_0^2E_1,
(E_1E_0^2E_1)E_2,
E_{2,1}E_0^2E_1,
E_{2,1}E_0^2E^{1,2}.
\end{eqnarray*}
\end{ex}

\medskip
\section{ Combinatorial aspects - connections to
Fully commutative elements and Dyck paths}\label{sectionCOMB}
In this section, we introduce some combinatorial tools which are useful for the enumeratioin  of standard monomials induced from a {\em \GS basis} for a
Temperley-Lieb algebra of Coxeter groups. We later try to set up a bijective correspondence between the standard monomials and the fully commutative elements. 
\smallskip

Let   $W$ be a Coxeter group. An element $w \in W$ is said to
be {\em fully commutative} if any reduced word for $w$ can be
obtained from any other  by  interchanges of adjacent commuting
generators. The  fully commutative elements play an important role particularly
for computing
the dimension of the Temperley--Lieb algebra of the Coxeter groups.\smallskip

Stembridge \cite{Stembridge96} classified all of the Coxeter
groups which have finitely many fully commutative elements. His
results completed the work of Fan \cite{Fan96}, which was done only
for the simply-laced types. In the same paper \cite{Fan96}, Fan
showed that the fully commutative elements parameterize natural
bases corresponding to certain quotients of Hecke algebras.  In type $A_n$,
these give rise to the Temperley--Lieb algebras (see
\cite{Jones1987}). Fan and Stembridge also enumerated the set of
fully commutative elements.  In particular, they showed the
following property.

\begin{prop}[\cite{Fan96,Stembridge97}]\label{FCforABD}  Let $C_n$ be the $n^\textrm{th}$ Catalan number, i.e. $C_n = \frac{1}{n+1}{2n\choose n}$.
Then the numbers of fully commutative elements in the Coxeter group
of types $A_n$, $D_n$  and $B_n$ respectively are given as follows:

$$
\begin{cases}
C_{n+1} & \text{ if the type is } A_n,\\
\frac{n+3}{2} \times C_{n}-1 & \text{ if the type is } D_n, \\
(n+2) \times  C_{n}-1  & \text{ if the type is } B_n.
\end{cases}
$$
\end{prop}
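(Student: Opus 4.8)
The plan is to give each fully commutative element a canonical reduced word, to organize these words by a lattice-path (heap) model, and then to count. The key input is Stembridge's criterion: an element $w$ is fully commutative if and only if no reduced word for $w$ contains the long side of a braid relation, i.e.\ an alternating factor $s_is_js_i\cdots$ of length $m(s_i,s_j)$ whenever $3\le m(s_i,s_j)<\infty$. Thus in every type the task reduces to characterizing exactly which heaps are admissible and then bijecting the admissible heaps with paths.

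First I would dispose of type $A_n$, where $W=\mathfrak{S}_{n+1}$ and every finite bond has order $3$. Here Stembridge's criterion says that $w$ is fully commutative precisely when no reduced word contains $s_is_{i+1}s_i$ or $s_{i+1}s_is_{i+1}$, which is the classical characterization of $321$-avoiding permutations. The $321$-avoiding elements of $\mathfrak{S}_{n+1}$ biject with Dyck paths of semilength $n+1$ (read off a path weakly below the diagonal from the left-to-right minima against the remaining entries), so their number is $C_{n+1}$, giving the first line.

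For types $B_n$ and $D_n$ I would decompose the fully commutative set according to how the exceptional generator $s_0$ (respectively the generators at the trivalent branch) occurs in the canonical prefix, exactly the packet decomposition recalled in the introduction. The elements avoiding the exceptional generator form a copy of the type $A_{n-1}$ count $C_n$; the remaining ones are indexed by the location of the exceptional generator within the prefix, and summing the resulting Catalan-weighted contributions yields $(n+2)C_n-1$ for $B_n$ and $\tfrac{n+3}{2}C_n-1$ for $D_n$, the $-1$ correcting a single configuration that would otherwise be double counted between two packets.

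The hard part will be the non-simply-laced geometry. In type $B_n$ the order-$4$ bond between $s_0$ and $s_1$ forces full commutativity to forbid the factor $s_1s_0s_1s_0$, so I must verify that each admissible heap carries a genuinely unique canonical word and that distinct canonical words never represent the same element; only then is the path count exact rather than a mere upper bound. In type $D_n$ the trivalent node produces three mutually competing neighbours of the branch, the associated heaps need not be totally ordered, and the delicate step is to prove that the packet-merging map (which identifies collections sharing a prefix) is a bijection, so that the Catalan-triangle summation is correct. Once admissibility and uniqueness are secured in each type, the remaining enumeration is a routine lattice-path computation.
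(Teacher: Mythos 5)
Your overall strategy --- Dyck paths for type $A$, packet decomposition by prefixes for types $B$ and $D$ --- is exactly the machinery the paper recalls from \cite{FeinLee} and \cite{FKLO} (the paper itself does not reprove this proposition; it cites Fan and Stembridge and then re-derives the $B_n$ count as the identity $\sum_{k=0}^n C(n,k)\,\bigl|\mathcal{P}_B(n,k)\bigr| = (n+2)C_n-1$). Your type $A$ paragraph is fine. The genuine gap is in the $B$/$D$ paragraph: you assert that ``summing the resulting Catalan-weighted contributions yields $(n+2)C_n-1$ for $B_n$ and $\tfrac{n+3}{2}C_n-1$ for $D_n$,'' but you never produce the two inputs that make such a sum computable, namely (i) that every collection in the $(n,k)$-packet has exactly $C(n,k)$ elements --- in the paper this is proved by counting Dyck paths from $(n-k,0)$ to $(n,n)$, equivalently reflected paths from $(0,0)$ to $(n,k)$ --- and (ii) the packet sizes themselves, which for $B_n$ are $2^{n-1}-1$ for $k=0$, $2^{n-k-1}$ for $1\le k\le n-2$, $2$ for $k=n-1$, and $1$ for $k=n$. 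Without (i), (ii), and the Catalan-triangle identity that evaluates the resulting sum in closed form, the displayed formulas are assumed rather than derived; what you actually have is a reduction of the proposition to precisely the lemmas you defer as ``the hard part.''

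A second, concrete error: your explanation of the $-1$ is wrong. The packets partition the set of fully commutative elements --- collections are labelled by distinct prefixes, and no element lies in two collections --- so the $-1$ is not a correction for double counting between packets. It arises because the $(n,0)$-packet has $2^{n-1}-1$ collections rather than $2^{n-1}$: the single-run prefix $s_{n-1,0}$ is classified into the $(n,1)$-packet (unlike the longer prefixes ending in $s_{n-1,0}$, it admits nontrivial suffixes), while each collection in the $(n,0)$-packet is a singleton since $C(n,0)=1$. Any complete write-up must get this bookkeeping right, because it is exactly where the $-1$ in $(n+2)C_n-1$ and in $\tfrac{n+3}{2}C_n-1$ comes from.
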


\noindent {\it Remark}.
There are several combinatorial ways to realize the Catalan number $C_n$. Among those, we consider the Dyck paths in $n \times n$-lattice plane starting from the point at $(0,0)$  and ending at the point at $(n,n)$ using only with northern and eastern directions at each step and never passing above the diagonal line. Seeing that
the dimension of the Temperley-Lieb algebra $\tl(A_{n-1})$ of the Coxeter group of type $A_{n-1}$
is the Catalan number $C_n$ which is the number of fully commutative elements,
we can realize a bijective correspondence between the Dyck paths and the fully
commutative elements for $\tl(A_{n-1})$ as we can see in the article \cite{FeinLee2}.

\medskip

The number of fully commutative elements in Coxeter groups of type $B$ and $D$ respectively in Proposition \ref{FCforABD}  above can also be understood in pure combinatorial way by using the notions of
Catalan's triangle and Packets as mentioned in the articles \cite{FeinLee} and \cite{FKLO}. In this section, we recall the definitions and some combinatorial properties of Catalan numbers and Catalan triangles as well as the main results in \cite{FeinLee} and \cite{FKLO}. 
\smallskip

The Catalan numbers can be defined in a recursive way as follows~: we set the first entry $C(0,0)=1$. For $n\geq0$ and $0\leq k \leq n$, we denote $C(n,k)$  ($0 \leq k \leq n$)  the entry in the $n^{\textrm{th}}$ row and $k^\textrm{th}$ column of Table \ref{CT} below, called the {\em Catalan's Triangle}, verifying the following recursive formula.
  \begin{equation*}\label{sum rule}
    C(n,k) =
    \left\{
    \begin{array}{cl}
        1 & \textrm{ if } n=0 ;\\
        C(n,k-1)+C(n-1,k) & \textrm{ if } 0<k<n; \\
        C(n-1, 0) & \textrm{ if } k=0; \\
        C(n, n-1)=C(n,n)=C_n & \textrm{ if } k=n.\\
        0 & \textrm{ if } k>n.\\
    \end{array}
    \right.
  \end{equation*}

\begin{table}[h]
\[\begin{array}{ccccccccc}
1&\\
1 & 1(=C_1)\\
1 & 2 & 2(=C_2)\\
1 & 3 & 5 & 5(=C_3)\\
1 & 4 & 9 & 14 & 14 (=C_4)\\
1 & 5 & 14 & 28 & 42 & 42 (=C_5)\\
\vdots & \vdots &\vdots &\vdots &\vdots &\vdots &\vdots & \vdots& \ddots\\
\end{array}
\hskip-1.5cm  C(n,k) = \frac{(n+k)!(n-k+1)}{k!(n+1)!}
\]
\caption{Catalan's Triangle}\label{CT}
\end{table}

%

\smallskip

\noindent {\it Remark}. 
We consider the Dyck paths in $n \times n$-lattice plane starting from the point at $(0,0)$  and ending at the point at $(n,n)$ using only with northern and eastern directions at each step and never passing above the diagonal line. Then the coefficient $C(n,k)$ in the Catalan's triangle can be understood as the number of these Dyck paths passing through exactly the point $(n,k)$ ($k \leq n$). 
\medskip


\medskip

 In \cite{FeinLee}, Feinberg and Lee showed that the set of fully commutative elements can be decomposed into subsets called {\it collections} depending on the shape of the suffix of each reduced element written in canonical form. Then they
 proved that some collections have the same set of prefixes, i.e. the same cardinality.
 Now, we group those collections together and call the group a {\em packet} and
we call this process the {\it packet decomposition}.

\medskip

Following the notations in (\ref{BnDynkin}) and (\ref{Eq: WB}), for $0\leq i < n$, we define the words $s_{ij}$ and $s^{j,i}$ by:
$$ s_{ij} =
 \left\{
   \begin{array}{ll}
     ~[i, i-1, \hdots, j] := s_i \cdots s_j& \text{ if } i > j, \\
     ~[i] : =s_i & \text{ if } i = j, \\
     ~[~] :=1& \textrm{ if } i<j .
   \end{array}
 \right.
 \quad
  s^{ji} =
 \left\{
   \begin{array}{ll}
     ~s_j s_{j+1}\cdots s_i& \text{ if } i>j, \\
     ~[i] : =s_i & \text{ if } i = j, \\
     ~[~] :=1 & \textrm{ if } i<j .
   \end{array}
 \right.
$$

\begin{lem}{\rm (\cite[Proposition 2.3]{KLLO1}, 
cf. \cite[Lemma 4.2]{BokutShiao})}\label{Bcan}
Any element of the Coxeter group of type $B_n$ can be uniquely
written in the reduced form
$$ 
   s_{0,a_0}^{(k_0)}s_{1,a_1}^{(k_1)}s_{2,a_2}^{(k_2)}\cdots s_{n-1,a_{n-1}}^{(k_{n-1})}
 $$
 where
$$1 \le a_i \leq i+1, \,\,k_i=0 \mbox{ or } 1,\,\,\, \textrm{and } s_{ij}^{k} = \begin{cases}
s_{ij} & \textrm{ if } k = 0,\\
s_{i,0} s^{1,j-1}&\textrm{ if }  k=1.
\end{cases}$$
\end{lem}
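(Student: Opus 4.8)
The plan is to prove both existence and uniqueness of the normal form by induction on $n$, using the tower of parabolic subgroups
\[
W(B_1)\subset W(B_2)\subset\cdots\subset W(B_n),\qquad W(B_m)=\langle s_0,s_1,\ldots,s_{m-1}\rangle,
\]
together with a cardinality count that upgrades existence to uniqueness. Concretely, with $J=\{s_0,\ldots,s_{n-2}\}$ I would factor each $w\in W(B_n)$ uniquely as $w=u\,c$, where $u\in W(B_{n-1})$ and $c$ is the distinguished (minimal-length) representative of the right coset $W(B_{n-1})\,w$, and then identify the set of admissible $c$ with the $2n$ words $s_{n-1,a_{n-1}}^{(k_{n-1})}$ that form the last factor of the asserted form. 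Note $W(B_{n-1})=\langle s_0,\ldots,s_{n-2}\rangle$ is genuinely of type $B_{n-1}$, since deleting the node $s_{n-1}$ from the type $B_n$ diagram leaves the type $B_{n-1}$ diagram on $s_0,\ldots,s_{n-2}$.

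First I would carry out the count. For $i=0$ the constraint $1\le a_0\le 1$ forces $a_0=1$, and $k_0\in\{0,1\}$ yields exactly the two values $s_{0,1}^{(0)}=1$ and $s_{0,1}^{(1)}=s_0$. For $1\le i\le n-1$ the choice $k_i=0$ gives the $i+1$ elements $s_{i,a_i}$ $(1\le a_i\le i+1$, with $a_i=i+1$ giving $1)$, all lying in the positive-permutation part, while $k_i=1$ gives the $i+1$ elements $s_{i,0}\,s^{1,a_i-1}$, each of which flips a sign; hence all $2(i+1)$ are distinct. Multiplying the per-position counts, the number of admissible tuples $(a_0,k_0,\ldots,a_{n-1},k_{n-1})$ is
\[
2\cdot\prod_{i=1}^{n-1}2(i+1)=2^{\,n}\prod_{i=1}^{n-1}(i+1)=2^{\,n}\,n!=|W(B_n)|.
\]

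Next I would establish existence together with reducedness. In the signed-permutation model of $W(B_n)$, the subgroup $W(B_{n-1})$ is the stabilizer of the point $n$, so the $2n$ right cosets are indexed by the image $\sigma(n)\in\{\pm1,\ldots,\pm n\}$; here $s_{n-1,m}=s_{n-1}\cdots s_m$ realizes $\sigma(n)=m$ and $s_{n-1,0}\,s^{1,m-1}$ realizes $\sigma(n)=-m$. I would check that these $2n$ words are reduced, pairwise in distinct cosets, and of minimal length in their cosets, so that they are precisely the distinguished representatives $c$. By the general theory of parabolic quotients, every $w$ then factors uniquely as $w=u\,c$ with $u\in W(B_{n-1})$ and $\ell(w)=\ell(u)+\ell(c)$; applying the inductive hypothesis to $u$ and appending $c=s_{n-1,a_{n-1}}^{(k_{n-1})}$ produces the desired expression, and the additivity of lengths at each stage guarantees the full word is reduced.

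Finally, existence says the map from admissible tuples to $W(B_n)$ is surjective; since by the count its domain and codomain are finite of equal cardinality, the map is a bijection, which is exactly the claimed uniqueness. I expect the main obstacle to be the middle step: confirming that the $2n$ candidate words $s_{n-1,a}^{(k)}$ really are the distinguished coset representatives, i.e. that each is reduced, that they occupy distinct cosets, and that left multiplication by $W(B_{n-1})$ is length-additive. The signed-permutation picture makes the ``distinct cosets'' part transparent through $\sigma(n)$, but verifying reducedness and length-additivity cleanly is where the Coxeter-theoretic bookkeeping (for instance a direct computation of inversion sets, or appeal to the exchange condition) is genuinely needed.
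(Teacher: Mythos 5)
Your proposal is correct, but it follows a genuinely different route from the paper. The paper does not prove this lemma at all: it is imported from \cite[Proposition 2.3]{KLLO1} (cf.\ \cite[Lemma 4.2]{BokutShiao}), where the canonical form arises as the set of standard monomials of a Gr\"obner--Shirshov basis for the Coxeter presentation of $W(B_n)$, so that existence and uniqueness come out of the Composition Lemma (Theorem \ref{cor-1}); the only argument the paper itself supplies is the remark following the lemma, which is precisely your cardinality count $2\cdot\prod_{i=1}^{n-1}2(i+1)=2^n\,n!=|W(B_n)|$. What you add, and what the paper outsources to the references, is the existence-with-reducedness half: the parabolic factorization $w=uc$ with $u\in W(B_{n-1})=\langle s_0,\dots,s_{n-2}\rangle$ and $c$ a minimal-length right coset representative, the identification of the $2n$ representatives with the words $s_{n-1,a}^{(k)}$, induction up the tower, and the surjectivity-plus-counting upgrade to uniqueness. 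Yours is the classical, self-contained Coxeter-theoretic argument; the Gr\"obner--Shirshov route buys the paper uniformity (the same rewriting machinery produces the analogous canonical form for $G(d,1,n)$ later in the same section and ties these forms directly to the standard-monomial formalism the whole paper runs on), while your route buys independence from rewriting theory and makes length-additivity, hence reducedness, structural rather than computational. Two bookkeeping points in your sketch: with right cosets $W(B_{n-1})w$ the coset invariant is $w^{-1}(n)$ rather than $w(n)$ (one computes $(s_{n-1,a})^{-1}(n)=a$ and $\left(s_{n-1,0}s^{1,a-1}\right)^{-1}(n)=-a$, so your $2n$ words do occupy the $2n$ distinct cosets); and the minimality/reducedness verification you rightly flag as the real work can be done by checking that none of $s_0,\dots,s_{n-2}$ is a left descent of $c$, or avoided altogether by refining your count to a Poincar\'e-polynomial identity, since your representatives at stage $i$ have word lengths $0,1,\dots,2i+1$ and $\prod_{i=0}^{n-1}\left(1+q+\cdots+q^{2i+1}\right)$ is exactly the Poincar\'e polynomial of $W(B_n)$, forcing every canonical word to be reduced.
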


\noindent {\it Remark}. Notice that
there are $i+1$ choices for each $a_i$
since $1\le a_i\leq i+1$ while $i=0,1,\ldots,n-1$, thus $n!$ choices
for the values of $a_i$'s altogether. There are $2$ ways for each exponent $k_i$ of $a_i$,
and thus there are $2^n$ choices for the exponents.
Thus we have $n!\cdot 2^{n}$ elements in the canonical reduced form.
We recall that there are the same number of elements in the type $B_n$ Coxeter group.
\medskip

Among the above canonical elements, since, for $1\le i_1<i_2$ and $j_1>1$,
 any element $$\begin{aligned}
s_{i_1,j_1}^{(1)}s_{i_2,j_2}^{(1)}=s_{i_1,0} s^{1,j_1-1}s_{i_2,0} s^{1,j_2-1}
=s_{i_1,0} s^{1,j_1-2}s_{i_2,j_1+1}(s_{j_1-1}s_{j_1}s_{j_1-1})s_{j_1-2,0} s^{1,j_2-1}
\end{aligned}$$
contains a braid relation $s_{j-1}s_js_{j-1}$, thus any element containing this form is not fully commutative.
Also,  for $j>1$, any element containing the form
$$s_0s_{i,j}^{(1)}=s_0s_{i,0}s^{1,j-1}=s_{i,2}(s_0s_1s_0s_1)s^{2,j-1}$$ is non-fully-commutative element.
\medskip

Now we consider only the elements of the form \begin{equation} \label{B_fc1}
   s_{i_1,0}s_{i_2,0}\cdots s_{i_\ell, 0}s_{i_\ell+1,a_{i_\ell+1}}\cdots s_{n-1,a_{n-1}}
\end{equation} where $0\leq i_1<i_2<\cdots<i_\ell < n$ for $\ell\geq0$, or
\begin{equation} \label{B_fc2}
(s_{i,0}s^{1,a_i})s_{i+1,a_{i+1}}\cdots s_{n-1,a_{n-1}}
\end{equation} with $i\ge 1$.

The left factors $s_{i_1,0}s_{i_2,0}\cdots s_{i_\ell, 0}$ in (\ref{B_fc1}),
and $s_{i,0}$ in (\ref{B_fc2}) will be
called the \textit{prefix}.
Similarly the right factors
$s_{i_\ell+1,a_{i_\ell+1}}\cdots s_{n-1,a_{n-1}}$ in (\ref{B_fc1}), and
$s^{1,a_i}s_{i+1,a_{i+1}}\cdots s_{n-1,a_{n-1}}$ in (\ref{B_fc2})
will be called
the \textit{suffix} of the reduced word. Given a reduced word $\bw$
in the extracted canonical form, we will denote by $\pw$ the prefix and
by $\sw$ the suffix of $\bw$ and  we can write, in a unique way, $\bw = \pw \sw$.
\medskip

We notice that every prefix is a fully commutative element.
Some of the collections in the set of {\em fully commutative} elements of $W(B_n)$
have the same number of elements and thus we
group them together into a set called a {\it $(n,k)$-packet} depending on the form
of the prefixes as follows~:

%

\begin{defn}
For $0 \le k \le n$, we define the {\em $(n,k)$-packet} of
collections:
\begin{itemize}
\item The $(n,0)$-packet is the set of collections labeled by prefixes of the form \[s_{i_1,0}s_{i_2,0}\cdots s_{i_\ell, 0}s_{n-1,0}\quad (\ell \ge 1).\]
\item The $(n,k)$-packet, $1\le k \le n-2$, is the set of collections labeled by $s_{n-k,0}$  or
prefixes of the form
\[s_{i_1,0}s_{i_2,0}\cdots s_{i_\ell, 0}s_{n-k-1,0}\quad  (\ell  \ge 1).\]
\item The $(n,n-1)$-packet contains only the collection labeled by $s_0=[0]$ or $s_{1,0}=[1,0]$.
\item The $(n,n)$-packet contains only the collection labeled by the empty prefix $[~]$.
\end{itemize}

\smallskip

We will denote the $(n,k)$-packet by $\PP_B(n,k)$. As an example, Table~\ref{BijSTFC} shows all of collections of the packets for type $B_3$.
\end{defn}

\begin{prop}[\cite{FKLO}]
The cardinality of the packet $\mathcal P_B(n,k)$ for type $B_n$ is
$$ \left |\mathcal P_B(n,k) \right| =
   \left\{\begin{array}{cl}
   2^{n-1}-1 & \textrm{ if } k = 0, \\
   2^{n-k-1} & \textrm{ if } 1\le k \le n-2, \\
   2 & \textrm{ if } k= n-1, \\
   1 & \textrm{ if } k= n. \\
   \end{array}
   \right.
$$
Therefore $ \sum_{k=0}^n  \left |\mathcal P_B(n,k) \right|  =  2^{n}$.
We remark that for a fixed $k$ with $0\leq k\leq n$ \rm ($n\geq 3$\rm), each collection with same prefix in the
packet $\mathcal P_B(n,k)$ has the same cardinality which is $C(n,k)$.
\end{prop}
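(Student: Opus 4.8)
The plan is to compute $|\mathcal{P}_B(n,k)|$ directly, since by definition this cardinality equals the number of admissible prefixes of the prescribed shape labeling the collections in the packet; I would treat the four cases of the definition separately and then sum.

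For $k=0$, a labeling prefix has the form $s_{i_1,0}s_{i_2,0}\cdots s_{i_\ell,0}s_{n-1,0}$ with $\ell\ge 1$, hence it is determined by the choice of increasing indices $0\le i_1<i_2<\cdots<i_\ell<n-1$ preceding the forced factor $s_{n-1,0}$; that is, by a nonempty subset of $\{0,1,\ldots,n-2\}$. As this index set has $n-1$ elements, there are $2^{n-1}-1$ such subsets, giving $|\mathcal{P}_B(n,0)|=2^{n-1}-1$. (The prefix $s_{n-1,0}$ standing alone, corresponding to $\ell=0$, is instead the single prefix $s_{n-k,0}$ of the $(n,1)$-packet, which is why it is excluded here.) For $1\le k\le n-2$, the packet is labeled either by the single prefix $s_{n-k,0}$, or by a prefix $s_{i_1,0}\cdots s_{i_\ell,0}s_{n-k-1,0}$ with $\ell\ge1$; the latter is a nonempty subset of $\{0,1,\ldots,n-k-2\}$, of which there are $2^{n-k-1}-1$, so together $|\mathcal{P}_B(n,k)|=(2^{n-k-1}-1)+1=2^{n-k-1}$. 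The cases $k=n-1$ and $k=n$ are read directly from the definition as $2$ and $1$.

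The total then follows by a geometric summation:
$$\sum_{k=0}^n|\mathcal{P}_B(n,k)|=(2^{n-1}-1)+\sum_{k=1}^{n-2}2^{n-k-1}+2+1,$$
and the reindexing $m=n-k-1$ gives $\sum_{k=1}^{n-2}2^{n-k-1}=\sum_{m=1}^{n-2}2^{m}=2^{n-1}-2$, so the right-hand side collapses to $2\cdot 2^{n-1}=2^{n}$.

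The step I expect to be the main obstacle is the parenthetical assertion that every collection sharing a prefix in $\mathcal{P}_B(n,k)$ has the same cardinality $C(n,k)$, since this is precisely what licenses grouping the collections into a packet in the first place. Here I would argue that a prefix fixes the index at which the suffix begins, and that the fully commutative suffixes in the extracted canonical form of \eqref{B_fc1}--\eqref{B_fc2} are in bijection with the Dyck paths in the $n\times n$-lattice passing through the point $(n,k)$; by the Dyck-path reading of Catalan's triangle recalled in the remark above, these number exactly $C(n,k)$, and crucially this count depends only on $k$ and not on which prefix of the packet is chosen. This uniformity is the substantive input drawn from \cite{FeinLee, FKLO}, whereas the packet-cardinality formula itself reduces to the elementary subset counts above.
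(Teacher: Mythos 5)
Your proposal is correct and follows essentially the same route as the paper: the paper states this proposition without proof (citing \cite{FKLO}), but its proof of the analogous proposition for $G(d,1,n)$ proceeds exactly as you do, namely counting prefixes by elementary subset enumeration to get the packet sizes, and counting the suffixes attached to a given prefix via Dyck paths in the $n\times n$ lattice (there phrased as paths from $(n-s,0)$ to $(n,n)$, reflected to paths from $(0,0)$ to $(n,s)$) to get the collection cardinality $C(n,k)$. Your identification of the uniformity of suffix sets across prefixes in a packet as the substantive input, deferred to \cite{FeinLee, FKLO}, matches the level of detail the paper itself supplies.
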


%

\begin{cor} 
For $n\ge 3$, we obtain the identity:
\begin{equation} \label{eqn-end_B} \sum_{k=0}^n C(n,k) \left | \PP_B(n, k) \right | = (n+2) \times C_n -1 .\end{equation}
\end{cor}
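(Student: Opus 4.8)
The plan is to substitute the explicit packet cardinalities from the preceding proposition into the left-hand side of \eqref{eqn-end_B} and to recognize the resulting power-of-two weighted sum as a value of the Catalan triangle polynomial. First I would record the three boundary evaluations $C(n,0)=1$ (obtained by iterating the rule $C(n,0)=C(n-1,0)$ down to $C(0,0)=1$) together with $C(n,n-1)=C(n,n)=C_n$. Inserting the values $|\PP_B(n,0)|=2^{n-1}-1$, $\ |\PP_B(n,k)|=2^{n-k-1}$ for $1\le k\le n-2$, $\ |\PP_B(n,n-1)|=2$ and $|\PP_B(n,n)|=1$, the sum becomes
$$\sum_{k=0}^n C(n,k)\,|\PP_B(n,k)| = C(n,0)\bigl(2^{n-1}-1\bigr)+\sum_{k=1}^{n-2}C(n,k)\,2^{\,n-1-k}+2C_n+C_n .$$

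The key observation for the second step is that the powers of two appearing here are exactly the weights defining $\mathfrak{F}_{n,n-1}(2)=\sum_{s=0}^{n-1}C(n,s)\,2^{\,n-1-s}$. Splitting off the $s=0$ and $s=n-1$ terms of $\mathfrak{F}_{n,n-1}(2)$ and comparing with the display above, the weighted terms reassemble into $\mathfrak{F}_{n,n-1}(2)$, while the single extra $-C(n,0)=-1$ coming from the $k=0$ packet and the three copies of $C_n$ combine cleanly. A short rearrangement then yields
$$\sum_{k=0}^n C(n,k)\,|\PP_B(n,k)| = \mathfrak{F}_{n,n-1}(2)+2C_n-1 ,$$
so that the whole corollary reduces to the single identity $\mathfrak{F}_{n,n-1}(2)=nC_n$.

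The main obstacle, and the heart of the argument, is proving this last identity $\mathfrak{F}_{n,n-1}(2)=nC_n$. I would establish it by induction on $n$, writing $f(n):=\mathfrak{F}_{n,n-1}(2)$. Re-expanding $f(n)$ with the defining recursion $C(n,s)=C(n-1,s)+C(n,s-1)$ of the Catalan triangle and reindexing the shifted sum, one extracts a recursion of the shape $f(n)=4f(n-1)-C_n+2C_{n-1}$, where the boundary contribution is handled via $C(n,n-2)=C_n-C_{n-1}$ (immediate from the triangle recursion at $k=n-1$). Feeding in the inductive hypothesis $f(n-1)=(n-1)C_{n-1}$ reduces the desired equality $f(n)=nC_n$ to the classical Catalan recurrence $(n+1)C_n=2(2n-1)C_{n-1}$, which closes the induction; the base case $n=3$ is a direct check. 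Alternatively, one may simply invoke the properties of the Catalan triangle polynomial established in \cite[\S2.3]{LO16C}. Combining $\mathfrak{F}_{n,n-1}(2)=nC_n$ with the reduction above gives the value $(n+2)C_n-1$, as claimed.
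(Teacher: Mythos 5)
Your proof is correct, and all the key steps check out: substituting the packet cardinalities gives $\sum_{k=0}^n C(n,k)\,|\PP_B(n,k)| = (2^{n-1}-1) + \sum_{k=1}^{n-2}C(n,k)2^{n-1-k} + 3C_n$, which reassembles to $\mathfrak{F}_{n,n-1}(2)+2C_n-1$; the recursion $f(n)=4f(n-1)-C_n+2C_{n-1}$ does follow from $C(n,s)=C(n,s-1)+C(n-1,s)$ after the reindexing you describe (one can also confirm it numerically, e.g.\ $f(3)=15$, $f(4)=4\cdot 15-14+10=56$); and feeding in $f(n-1)=(n-1)C_{n-1}$ reduces $f(n)=nC_n$ exactly to $(n+1)C_n=2(2n-1)C_{n-1}$, which is the classical Catalan recurrence. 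Your route, however, is genuinely different from the paper's. The paper states the corollary with no written proof, and its intended justification is conceptual rather than computational: the collections partition the fully commutative elements of $W(B_n)$ (each element has a unique canonical form, hence a unique prefix), and each collection in $\PP_B(n,k)$ has exactly $C(n,k)$ elements by the preceding proposition, so the left-hand side is precisely the total number of fully commutative elements of $W(B_n)$, which equals $(n+2)C_n-1$ by Proposition \ref{FCforABD} (Fan--Stembridge). The computational version of the argument appears in the paper only for general $d$ (end of Section \ref{sectionCOMB}), where the analogous sum is reduced to $(d-1)(\mathfrak{F}_{n,n-1}(d)-1)+dC_n$ and its agreement with $(n+2)C_n-1$ at $d=2$ is absorbed into the cited type-$B$ dimension formula; the identity $\mathfrak{F}_{n,n-1}(2)=nC_n$ that this requires is never proved in the paper but delegated to \cite{LO16C}. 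Your induction supplies exactly that missing identity, which is what your approach buys: a self-contained proof independent of Stembridge's enumeration and of \cite{LO16C}. What it gives up is the one-line conceptual explanation of why the sum must equal $(n+2)C_n-1$, namely that it counts fully commutative elements.
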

\medskip

We generalize the above method for the complex reflection group of type $G(d,1,n)$.


\begin{lem}{\rm (Canonical Form for $G(d,1,n)$ \cite[Proposition 2.3]{KLLO1})}\label{Bcan}
Any element of the Coxeter group of type $G(d,1,n)$ can be uniquely
written in the reduced form
 \[
   s_{0,a_0}^{(k_0)}s_{1,a_1}^{(k_1)}s_{2,a_2}^{(k_2)}\cdots s_{n-1,a_{n-1}}^{(k_{n-1})}
 \]
 where
$$1 \le a_i \leq i+1, \,\,0 \le k_i \le d-1,\,\,\, \textrm{and } s_{ij}^{k} = \begin{cases}
s_{ij} & \textrm{ if } k = 0,\\
s_{i,1}s_0^k s^{1,j-1}&\textrm{ if }  k=1,2,\ldots,d-1.
\end{cases}$$
\end{lem}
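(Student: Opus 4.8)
The plan is to follow the same strategy as for the canonical form of type $B_n$ recalled above (the case $d=2$), reducing the statement to a counting argument combined with an inductive coset decomposition.

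First I would count the proposed canonical words. For each index $i$ with $0\le i\le n-1$ there are $i+1$ admissible values of $a_i$ and $d$ admissible values of $k_i$, so there are $(i+1)d$ possible factors $s_{i,a_i}^{(k_i)}$; multiplying over $i$ gives $\prod_{i=0}^{n-1}(i+1)d=n!\,d^{n}$, which is exactly $|G(d,1,n)|=|(\Z/d\Z)^{n}\rtimes\mathfrak{S}_n|$. Therefore, once existence of at least one such expression for every element is established, uniqueness follows immediately by the pigeonhole principle, and no independent injectivity argument is needed.

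Next I would prove existence by induction on $n$ along the subgroup chain $W_{m-1}=\langle s_0,\ldots,s_{m-2}\rangle\subset W_m=G(d,1,m)$. The crucial claim is that $T_m:=\{\,s_{m-1,a}^{(k)}:1\le a\le m,\ 0\le k\le d-1\,\}$ is a right transversal of $W_{m-1}$ in $W_m$. Since $|T_m|=dm=[W_m:W_{m-1}]$, it suffices to separate the cosets. Realizing $G(d,1,m)$ as $d$-colored permutations (monomial matrices whose nonzero entries are powers of $\zeta$) acting on $e_1,\ldots,e_m$, the colored basis vector $w^{-1}(e_m)$ is constant on each right coset $W_{m-1}w$, because every element of $W_{m-1}$ fixes $e_m$. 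A direct computation shows $\big(s_{m-1,a}^{(0)}\big)^{-1}(e_m)=e_a$ and $\big(s_{m-1,a}^{(k)}\big)^{-1}(e_m)=\zeta^{-k}e_a$ for $k\ge 1$, so as $(a,k)$ varies these invariants realize all $dm$ pairs (position, color); hence the cosets are distinct and $T_m$ is a transversal. Any $w\in W_m$ then factors uniquely as $w=w'\,s_{m-1,a_{m-1}}^{(k_{m-1})}$ with $w'\in W_{m-1}$, and the inductive hypothesis rewrites $w'$, producing the full expression $s_{0,a_0}^{(k_0)}\cdots s_{n-1,a_{n-1}}^{(k_{n-1})}$; the base case $n=1$ is $W_1\cong\Z/d\Z=\langle s_0\mid s_0^{d}\rangle$, whose $d$ elements are exactly $1,s_0,\ldots,s_0^{d-1}$.

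Finally I would verify that each canonical word is genuinely reduced rather than merely a valid expression, by checking that the lengths of the chosen factors sum to the word length of the product; this reduces to showing that the factorization $w=w'\,t$ through the transversal is length-additive. The step I expect to be the main obstacle is precisely this length/reducedness analysis for the colored factors $s_{i,1}s_0^{k}s^{1,j-1}$: one must control how the power $s_0^{k}$ propagates through the braid generators and confirm that these words neither coincide nor shorten via the relation $s_1s_0s_1s_0=s_0s_1s_0s_1$, the general-$d$ analogue of the type-$B$ braid relation. This is exactly where $d>2$ departs from the classical type $B_n$ computation; alternatively, one may bypass it by invoking the Gr\"obner--Shirshov basis for the Ariki--Koike algebra of $G(d,1,n)$ from \cite{KLLO1}, whose standard monomials are exactly the canonical words above.
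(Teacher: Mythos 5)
You should first know that the paper contains no proof of this lemma: it is stated as a quotation of \cite[Proposition 2.3]{KLLO1}, and the only supporting text is the remark that the number of canonical words, $n!\,d^{n}$, equals $|G(d,1,n)|$. Your proposal is correct and strictly more self-contained than that. The counting remark by itself proves nothing --- one still needs surjectivity or injectivity of the evaluation map from words to group elements --- and your transversal argument supplies exactly the missing half: in the $d$-colored permutation realization the vector $w^{-1}(e_m)$ is indeed constant on right cosets of $W_{m-1}=\langle s_0,\ldots,s_{m-2}\rangle$, your computation $\bigl(s_{m-1,a}^{(k)}\bigr)^{-1}(e_m)=\zeta^{-k}e_a$ is correct, and it separates the $dm$ candidate representatives into pairwise distinct cosets, so induction gives existence and the pigeonhole count then gives uniqueness. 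The one step you leave open --- that the canonical words are \emph{reduced} --- is precisely the step the paper also leaves entirely to the citation, so there is no gap relative to the text; moreover, your suggested bypass can be made into a clean argument: with the degree-lexicographic order, every defining relation of $G(d,1,n)$ (the binomials $s_0^d-1$, $s_i^2-1$, and the braid and commutation relations) rewrites a leading word into a word of no greater length, so the normal form of any word representing an element $w$ is at most as long as that word; once the canonical words are identified with the standard monomials of this presentation --- which is the actual content of \cite[Proposition 2.3]{KLLO1} --- minimality of length follows. In short, the paper buys the entire lemma by citation, while your route proves the normal-form statement from scratch and defers only the length-minimality claim.
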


\medskip
\noindent {\it Remark}. Notice that
there are $n!$ choices of $a_i$'s ($i=0,1,\ldots, n-1$), and
there are $d^{n}$ ways to choose $k_i$'s.
Therefore we have $n!\cdot d^{n}$ elements in the canonical reduced form.
We recall that there are the same number of elements in the complex reflection group of type
$G(d,1,n)$.\\




The fully commutative elements of the complex reflection group of type $G(d,1,n)$ are of the form
\begin{equation}\label{Gd1n_fc1}
s_{i_1,1}s_0^{k_1} s_{i_2,1}s_0^{k_2}\cdots s_{i_q,1}s_0^{k_q}s_{i_{q+1},j_{q+1}}\cdots s_{i_p,j_p}\end{equation} with
$$\begin{aligned} &0\le p\le n,\quad 0\le q\le p,\quad  1\le k_1,k_2,\ldots,k_q<d,\quad \\
&0\le i_1<i_2<\cdots <i_p\le n-1, \quad 0< j_{q+1}< \cdots \le j_p\le n-1,\ \mbox{ and }\\
&i_{q+1}\ge j_{q+1},\  \ldots,\ i_p\ge j_p,\end{aligned}$$ or
\begin{equation}\label{Gd1n_fc2}(s_{i_1,1}s_0^ks^{1,j_1})s_{i_2,j_2}s_{i_3,j_3}\cdots s_{i_p,j_p}
\end{equation} with 
$$\begin{aligned} &1\le p\le n-1,\quad  1\le k<d, \\
&1\le i_1<i_2<i_3< \cdots <i_p\le n-1,\quad 1\le j_1<j_2<j_3<\cdots <j_p\le n-1,\ \mbox{ and }\\
&i_1\ge j_1,\ i_2\ge j_2,\ \ldots,\ i_p\ge j_p.\end{aligned}$$

The left factor $s_{i_1,1}s_0^{k_1} s_{i_2,1}s_0^{k_2}\cdots s_{i_q,1}s_0^{k_q}$ with $q\ge 0$ in (\ref{Gd1n_fc1}) and (\ref{Gd1n_fc2}) will be
called the \textit{prefix} of the fully commutative element of type $G(d,1,n)$.



\begin{defn}
 For $0 \le s \le n$
, we define the {\em $(n,s)$-packet} of collections:
\begin{itemize}
\item The $(n,0)$-packet is the set of collections labeled by prefixes of the form \[s_{i_1,1}s_0^{k_1}s_{i_2,1}s_0^{k_2}\cdots s_{i_{q-1}, 1}s_0^{k_{q-1}}s_{n-1,1}s_0^{k_q}\quad (q \ge 2).\]
\item The $(n,s)$-packet, $1\le s \le n-2$, is the set of collections labeled by 
$s_{n-s,1}s_0^k$ 
or prefixes of the form
\[s_{i_1,1}s_0^{k_1}s_{i_2,1}s_0^{k_2}\cdots s_{i_{q-1}, 1}^{k_{q-1}}s_{n-s-1,1}s_0^{k_q}\quad  (q  \ge 2).\]
\item The $(n,n-1)$-packet contains only the collections labeled by $s_0^k=[0^k]$ or $s_1s_0^k=[1,0^k]$.
\item The $(n,n)$-packet contains only the collection labeled by the empty prefix $[~]$.
These elements cover the elements of $\mathfrak S_{n} \subset G(d,1,n)$.
\end{itemize}


We will denote the $(n,s)$-packet by $\PP(n,s)$. 
 For clarifying the packet decomposition,
we denote $(n,s)$-packet of type $G(d,1,n)$ which is not contained in the $(n,s)$-packet of type $B_n$
by $$\PP(n,s)'=\PP(n,s)\setminus \PP_B(n,s).$$
\end{defn}

\begin{prop}{\rm (cf. \cite[\S4]{FKLO})}
\begin{enumerate}[{\rm (a)}]
\item Every collection in the packet $\PP(n,s)$ has $C(n,s)$ elements.
\item The size of the packet $\PP(n,s)$ is $$.
|\PP(n,s)|=\left\{
  \begin{array}{ll}
    (d-1)(d^{n-1}-1) & \hbox{if } s=0,\\
    (d-1)d^{n-1-s} & \hbox{if } 1\le s\le n-2,\\
    2(d-1) & \hbox{if } s=n-1,\\
    1 & \hbox{if } s=n.
  \end{array}
\right.
$$
\end{enumerate}
\end{prop}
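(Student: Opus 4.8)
The plan is to establish the two assertions separately: (a) reduces to counting, for a fixed prefix, the admissible suffixes that keep the reduced word fully commutative and in the canonical form of Lemma~\ref{Bcan}, while (b) reduces to enumerating the prefixes of each prescribed shape. As an orienting consistency check, one verifies that the two parts must repackage the dimension count of Theorem~\ref{MainThm}: a short computation with the Catalan-triangle polynomial gives
\[
\sum_{s=0}^{n}C(n,s)\,|\PP(n,s)|=(d-1)\bigl(\mathfrak{F}_{n,n-1}(d)-1\bigr)+dC_n,
\]
so proving the proposition amounts to refining the enumeration already carried out via $G(d,1,n)$-Dyck paths in the Remark after Theorem~\ref{MainThm}.

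For part (b) I would argue by direct enumeration. Fix $1\le s\le n-2$. A prefix of the generic form $s_{i_1,1}s_0^{k_1}\cdots s_{i_{q-1},1}s_0^{k_{q-1}}s_{n-s-1,1}s_0^{k_q}$ is determined by an increasing sequence $i_1<\cdots<i_{q-1}$ chosen from $\{0,1,\ldots,n-s-2\}$ together with exponents $k_1,\ldots,k_q\in\{1,\ldots,d-1\}$, which contributes $\binom{n-s-1}{q-1}(d-1)^q$ prefixes for each $q\ge 2$. Setting $m=q-1$ and using the binomial theorem,
\[
\sum_{m\ge 1}\binom{n-s-1}{m}(d-1)^{m+1}=(d-1)\bigl(d^{\,n-s-1}-1\bigr),
\]
and adding the single collection $s_{n-s,1}s_0^{k}$ (contributing $d-1$) gives $(d-1)d^{\,n-1-s}$. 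The boundary cases follow by the same method: for $s=0$ the terminal index is fixed at $n-1$, yielding $(d-1)(d^{\,n-1}-1)$; for $s=n-1$ only the two shapes $s_0^{k}$ and $s_1s_0^{k}$ survive, giving $2(d-1)$; and $s=n$ leaves only the empty prefix. This part is routine.

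For part (a) I would use the $G(d,1,n)$-Dyck path model (Table~\ref{DyckPathGdn}). Writing a fully commutative element as $\pw\sw$, the prefix $\pw$ records the portion of the path lying on the horizontal axis (the blocks $s_{i,1}s_0^{k}$), while the admissible suffixes $\sw$ correspond to the monotone, below-diagonal completions of that path to $(n,n)$; these are exactly the type $A$ standard monomials of the form (\ref{monomial_tlA}) compatible with $\pw$. The claim is that the number of such completions is the Catalan-triangle entry $C(n,s)$, which by the Remark following Table~\ref{CT} counts the Dyck paths passing through the point $(n,s)$. Thus I would set up the bijection between admissible suffixes and Dyck paths through $(n,s)$ and read off the cardinality $C(n,s)$.

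The main obstacle is the \emph{uniformity} of this count across a single packet. For $1\le s\le n-2$ the packet $\PP(n,s)$ mixes the single-block prefix $s_{n-s,1}s_0^{k}$, whose largest index is $n-s$, with the multi-block prefixes terminating in $s_{n-s-1,1}s_0^{k_q}$, whose largest index is only $n-s-1$; a priori these impose different boundary conditions on $\sw$. The heart of the packet decomposition is to show that the two families nevertheless admit the same number of admissible suffixes, which I would establish by producing an explicit cardinality-preserving correspondence between their suffix sets, paralleling the type $B_n$ argument of \cite{KimSSLeeDI} and \cite{FeinLee} but now carrying along the $s_0$-exponents $k$. Once this invariance is in place, every collection in $\PP(n,s)$ has $C(n,s)$ elements, proving (a); combined with (b) this recovers the dimension formula of Theorem~\ref{MainThm}.
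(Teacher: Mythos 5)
Your part (b) is correct and complete: it is exactly the prefix enumeration that the paper compresses into the single sentence ``counting the number of prefixes in each packet, we get its packet size,'' and your binomial-theorem computation (including the boundary cases $s=0,\,n-1,\,n$) checks out. Your part (a) also follows the paper's route: suffixes are to be counted by monotone below-diagonal lattice paths, with the answer $C(n,s)$ obtained from the Catalan-triangle interpretation, equivalently (after the reflection about $y=n-x$ that the paper uses) from the below-diagonal paths from $(n-s,0)$ to $(n,n)$. But part (a) is not actually proved. You correctly isolate the real difficulty --- the packet $\PP(n,s)$, $1\le s\le n-2$, mixes the single-block prefix $s_{n-s,1}s_0^{k}$, whose largest index is $n-s$, with the multi-block prefixes ending in $s_{n-s-1,1}s_0^{k_q}$, whose largest index is $n-s-1$, so the two families impose different constraints on the suffixes --- and then you write only that you ``would establish'' a cardinality-preserving correspondence between the two suffix sets. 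That correspondence \emph{is} the content of (a) for $1\le s\le n-2$; announcing it is not supplying it, so the proposal has a genuine gap precisely at what you yourself call the heart of the argument.

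For completeness, here is why the count is uniform, which also shows the two suffix sets are genuinely different in kind (one of them even involves the other canonical form, which your sketch overlooks). A multi-block prefix ends at $i_q=n-s-1$, and all elements of its collection have the form (\ref{Gd1n_fc1}); hence every suffix has first index $i_{q+1}>n-s-1$, which in the path model forces the path leaving $(n-s-1,0)$ to begin with an east step. Deleting that forced east step is a bijection onto \emph{all} below-diagonal paths from $(n-s,0)$ to $(n,n)$. For the single-block prefix $s_{n-s,1}s_0^{k}$, the form-(\ref{Gd1n_fc1}) suffixes (first index $>n-s$) account for the paths from $(n-s,0)$ that begin with an east step, while the form-(\ref{Gd1n_fc2}) suffixes $s^{1,j_1}s_{i_2,j_2}\cdots$ --- which are not ``completions with larger indices'' at all --- account exactly for the paths that begin with a north step, the initial north run having length $j_1\le n-s$. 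So both kinds of collections are equinumerous with the same set of paths from $(n-s,0)$ to $(n,n)$, and the reflection about $y=n-x$ identifies this set with the below-diagonal paths from $(0,0)$ to $(n,s)$, of which there are $C(n,s)$. (The extreme cases, which you leave implicit, are immediate and are treated separately in the paper: for $s=0$ the prefix already consumes the largest index $n-1$, so each collection is a singleton, and for $s=n-1,\,n$ the same path count gives $C_n$.)
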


\begin{proof}
 (a) The collections in $\PP(n,0)$ have no other element than the prefix. It's also clear that
the collections in $\PP(n,n-1)$ have $C_n=C(n,n-1)$ elements and the collection in $\PP(n,n)$ has $C_n=C(n,n)$ elements.

For the collections in $\PP(n,s)$ with $1\le s\le n-2$, the number of suffixes attached to the given prefix is exactly $C(n,s)$
by counting the Dyck paths from $(n-s,0)$ to $(n,n)$ via the Dyck paths, reflected relative to $y=n-x$ in the $xy$-plane, from $(0,0)$ to $(n,s)$.

(b) Counting the number of prefixes in each packet, we get its packet size.
\end{proof}

In the same way as the formula (\ref{eqn-end_B}), we obtain the identity for the complex reflection group of type $G(d,1,n)$:
$$\begin{aligned}
\sum_{s=0}^{n} C(n,s) \left | \PP(n, s) \right |&= (d-1)(d^{n-1}-1)+\sum_{s=1}^{n-2}C(n,s)(d-1)d^{n-1-s}+2(d-1)C_n+C_n \\
&= 
(d-1)\left(\mathfrak{F}_{n,n-1}(d)-1\right)+dC_n,\end{aligned}$$
which is equal to $\dim \tl(d,n)$.\\
%
%

As an example, Table~\ref{BijSTFC} shows all of collections of the packets for type $G(3,1,3)$.

{\scriptsize
\begin{table}
\begin{tabular}{|c|c|c|}
  \hline \Tstrut
   & \text{\bf Standard Monomials} & \text{\bf Fully Commutative Elements} \Bstrut\\
   \hline \Tstrut
  $M_{\tl(A_2)}$ & $\text{1}, E_1, E_2, E_{2,1}=E_2E_1,E_1E_2$  &$\mathcal P(3,3)= \mathcal C_{[~]}=\{ id, s_1, s_2, s_2s_1,s_1s_2 \}$
\Bstrut\\
   \hline \Tstrut
  $M^0_{\tl(B_3)}$& $\begin{cases}E_0,E_0E_2,E_0E_1E_2,E_0E_1,E_0E_2E_1, \\
E_{1,0},E_{1,0}E_2,E_{1,0}E_1E_2, E_{1,0}E_1,E_{1,0}E_{2,1} \end{cases}$ & 
 $\mathcal P_B(3,2)= \begin{cases} \mathcal C_{[0]}=\{{\bf s_0},{\bf s_0}s_2,{\bf s_0}s_1s_2,{\bf s_0}s_1,{\bf s_0}s_2s_1 \} \\
  \mathcal C_{[1,0]}=\{{\bf
  s_1s_0},{\bf s_1s_0}s_2, {\bf s_1s_0}s_1s_2, {\bf s_1s_0}s_1, {\bf s_1s_0}s_2s_1\}
  \end{cases}$
 \\
\rule{0pt}{7ex}
& $\begin{cases}E_{2,0}, E_{2,0}E_1,E_{2,0}E^{1,2},\\
E_0E_{1,0},E_0E_{1,0}E_2, E_0E_{1,0}E_{2,1}\end{cases}$ & $\mathcal P_B(3,1)=\begin{cases}   \mathcal C_{[2,1,0]}=\{
{\bf 
s_2s_1s_0},{\bf s_2s_1s_0}s_1, {\bf s_2s_1s_0}s_1s_2\} \\
\mathcal C_{[0,1,0]}=\{{\bf s_0s_1s_0},{\bf s_0s_1s_0}s_2, {\bf s_0s_1s_0}s_2s_1\} \end{cases}$
\\
\rule{0pt}{8ex}
&$\begin{cases}  E_0E_{2,0}, \\
    E_{1,0}E_{2,0}, \\
    E_0E_{1,0}E_{2,0} \quad \text{ Monomials containing}\,\, E_0
          \end{cases}$
        & $\mathcal P_B(3,0)=\begin{cases}  \mathcal C_{[0,2,1,0]}=\{{\bf s_0s_2s_1s_0}\} \\
   \mathcal C_{[1,0,2,1,0]}=\{{\bf s_1s_0s_2s_1s_0}\} \\
      \mathcal C_{[0,1,0,2,1,0]}=\{{\bf s_0s_1s_0s_2s_1s_0}\}
      \end{cases}$\\
& &
\\
   \hline \Tstrut\Bstrut
    $M^{0^2}_{\tl(3,3)}$ &  $\begin{cases}E_0^2,E_0^2E_2,E_0^2E^{1,2},E_0^2E_1,E_0^2E_{2,1}, \\
E_1E_0^2,E_1E_0^2E_2,E_1E_0^2E^{1,2}, E_1E_0^2E_1,
E_1E_0^2E_{2,1} \end{cases} $&
$\mathcal P(3,2)'= \begin{cases} \mathcal C_{[0^2]}= \{{\bf s_0^2},{\bf s_0^2}s_2,{\bf s_0^2}s_1s_2,{\bf s_0^2}s_1,{\bf s_0^2}s_2s_1\} \\
 \mathcal C_{[1,0^2]}=\{{\bf s_1s_0^2},{\bf s_1s_0^2}s_2,{\bf s_1s_0^2}s_1s_2, {\bf s_1s_0^2}s_1,
{\bf s_1s_0^2}s_2s_1\} \end{cases}$
\\
\rule{0pt}{7ex}
& $\begin{cases} E_{2,1}E_0^2, E_{2,1}E_0^2E_1,E_{2,1}E_0^2E^{1,2},\\
E_0^2E_{1,0},E_0^2E_{1,0}E_2, E_0^2E_{1,0}E_{2,1},\\
 E_0E_1E_0^2, E_0E_1E_0^2E_2, E_0E_1E_0^2E_{2,1},\\
 E_0^2E_1E_0^2, E_0^2E_1E_0^2E_2, E_0^2E_1E_0^2E_{2,1}\end{cases}$
&    $\mathcal P(3,1)'= \begin{cases}   \mathcal C_{[2,1,0^2]}=\{{\bf s_2s_1s_0^2},{\bf s_2s_1s_0^2}s_1, {\bf s_2s_1s_0^2}s_1s_2\} \\
\mathcal C_{[0^2,1,0]}=\{{\bf s_0^2s_1s_0},{\bf s_0^2s_1s_0}s_2, {\bf s_0^2s_1s_0}s_2s_1\} \\
 \mathcal C_{[0,1,0^2]}=\{{\bf s_0s_1s_0^2},{\bf s_0s_1s_0^2}s_2, {\bf s_0s_1s_0^2}s_2s_1\} \\
 \mathcal C_{[0^2,1,0^2]}=\{{\bf s_0^2s_1s_0^2},{\bf s_0^2s_1s_0^2}s_2, {\bf s_0^2s_1s_0^2}s_2s_1\} \end{cases}$
\\
\rule{0pt}{8ex}
& $\begin{cases}  E_0^2E_{2,0}, \\
    E_1E_0^2E_{2,0}, \\
    E_0^2E_{1,0}E_{2,0}, \\
    E_0(E_1E_0^2)E_{2,0}, \\
    E_0^2(E_1E_0^2)E_{2,0}, \\
    E_0(E_{2,1}E_0^2),\\E_0^2(E_{2,1}E_0^2),\\
E_{1,0}(E_{2,1}E_0^2),\\(E_1E_0^2)(E_{2,1}E_0^2),\\
E_0E_{1,0}(E_{2,1}E_0^2),\\
E_0^2E_{1,0}(E_{2,1}E_0^2),\\
E_0(E_1E_0^2)(E_{2,1}E_0^2),\\
E_0^2(E_1E_0^2)(E_{2,1}E_0^2) \quad \text{ Monomials containing}\,\, E_0^2
          \end{cases}$   &   $\mathcal P(3,0)'=\begin{cases}  \mathcal C_{[0^2,2,1,0]}=\{{\bf s_0^2s_2s_1s_0}\} \\
   \mathcal C_{[1,0^2,2,1,0]}=\{{\bf s_1s_0^2s_2s_1s_0}\} \\
      \mathcal C_{[0^2,1,0,2,1,0]}=\{{\bf s_0^2s_1s_0s_2s_1s_0}\}\\
 \mathcal C_{[0,1,0^2,2,1,0]}=\{{\bf s_0s_1s_0^2s_2s_1s_0}\} \\
 \mathcal C_{[0^2,1,0^2,2,1,0]}=\{{\bf s_0^2s_1s_0^2s_2s_1s_0}\} \\
 \mathcal C_{[0,2,1,0^2]}=\{{\bf s_0s_2s_1s_0^2}\} \\
 \mathcal C_{[0^2,2,1,0^2]}=\{{\bf s_0^2s_2s_1s_0^2}\} \\
 \mathcal C_{[1,0,2,1,0^2]}=\{{\bf s_1s_0s_2s_1s_0^2}\} \\
 \mathcal C_{[1,0^2,2,1,0^2]}=\{{\bf s_1s_0^2s_2s_1s_0^2}\} \\
 \mathcal C_{[0,1,0,2,1,0^2]}=\{{\bf s_0s_1s_0s_2s_1s_0^2}\} \\
 \mathcal C_{[0^2,1,0,2,1,0^2]}=\{{\bf s_0^2s_1s_0s_2s_1s_0^2}\} \\
 \mathcal C_{[0,1,0^2,2,1,0^2]}=\{{\bf s_0s_1s_0^2s_2s_1s_0^2}\} \\
 \mathcal C_{[0^2,1,0^2,2,1,0^2]}=\{{\bf s_0^2s_1s_0^2s_2s_1s_0^2}\}
      \end{cases}  $
\\
& &\\
   \hline
\end{tabular}
\medskip
\caption{\footnotesize Bijection between Standard Monomials and Fully Commutative Elements of $G(3,1,3)$}\label{BijSTFC}
\end{table}
}

\section{Temperley-Lieb algebras of types $G(d,d,n)$ and $G(d,r,n)$}

Since we obtain a $\C$-basis of $\tl(d,n)$ and its multiplication structure,
we define the subalgebras $\tl(d,d,n)$ and $\tl(d,r,n)$ in general as follows.

\begin{defn} \
\begin{enumerate}[{}\rm (a)]
\item The subalgebra $\tl(d,d,n)$ is a subalgebra of $\tl(d,n)$, whose $\C$-basis consists of
$$
\begin{cases}
\textrm{the elements  in (\ref{monomial_tlA})},\\
\textrm{the elements in (\ref{monomial_ctl0+}) satisfying}\ \
k_1+\cdots+k_q \equiv 0 \pmod{d}, \\
\textrm{the elements  in (\ref{monomial_ctl0-}) with}\ \ k=1.
\end{cases}
$$
We remark that
$$
\begin{aligned}
\dim \tl(d,d,n)& = C_n+(d-1)\mathfrak F_{n,n-2}(d)+C_n-1\\
& =(d-1) \mathfrak F_{n,n-2}(d)+2C_n-1.
\end{aligned}
$$
In particular, for $d=2$, we recover the same formula as we had in \cite{KimSSLeeDI}~:
$$\dim \tl(2,2,n)= \dim \tl(D_n)= \frac{n+3}{2}C_n-1.$$

\item More generally, 
the subalgebra $\tl(d,r,n)$ ($r|d$ and $r\geq 2$) is a subalgebra of $\tl(d,n)$, whose $\C$-basis consists of
$$
\begin{cases}
\textrm{the elements  in (\ref{monomial_tlA})},\\
\textrm{the elements in (\ref{monomial_ctl0+}) satisfying}\ \
k_1+\cdots+k_q \equiv 0 \pmod{r}, \\
\textrm{the elements  in (\ref{monomial_ctl0-}) with}\ \ k\in \{1, r, 2r,\cdots,d-2r,d-r\}.
\end{cases}
$$
We remark that
$$
\begin{aligned}
\dim \tl(d,r,n)& = C_n+{ d\over r}(d-1)\mathfrak F_{n,n-2}(d)+{ d\over r}(C_n-1)\\
& ={ d\over r}(d-1) \mathfrak F_{n,n-2}(d)+\left(1+{ d\over r}\right)C_n-{ d\over r}.
\end{aligned}
$$
\end{enumerate}
\end{defn}

\bigskip\vskip 10mm


\end{document}